\newtheorem{theorem}{Theorem}[section]
\newtheorem{proposition}[theorem]{Proposition}
\newtheorem{definition}[theorem]{Definition}
\newtheorem{lemma}[theorem]{Lemma}
\theoremstyle{remark}
\newtheorem{remark}[theorem]{Remark}
\newtheorem{example}[theorem]{\bf Example}
\newcommand{\R}{\mathbb{R}}
\def\R{\mathbb R}
\def\o{\omega}
\def\O{\Omega}
\def\a{\alpha}
\def\e{\varepsilon}
\def\rightarrow{\to}
\def\<{\langle}
\def\>{\rangle}
\def\t{\theta}
\begin{document}
\title{\bf{On transforms of timelike isothermic surfaces in pseudo-Riemannian space forms }}
\author{Yuping Song, \ \
        Peng Wang\footnote {corresponding author, supported by the Fundamental Research Funds for the Central Universities and the Project 11201340 of NSFC.} }
\date{}
\maketitle

\begin{center}
{\bf Abstract}
\end{center}
  The basic theory on the conformal geometry of timelike surfaces in pseudo-Riemannian space forms is introduced, which is parallel to the classical framework of Burstall etc. for spacelike surfaces. Then we provide a discussion on the transforms of timelike $(\pm)-$isothermic surfaces (or real isothermic, complex isothermic surfaces), including $c-$ polar transforms, Darboux transforms and spectral transforms. The first main result is that $c-$polar transforms preserve timelike $(\pm)-$isothermic surfaces, which are generalizations of the classical Christoffel transforms. The next main result is that a Darboux pair of timelike  isothermic surfaces can also be characterized as a Lorentzian $O(n-r+1,r+1)/O(n-r,r)\times O(1,1)-$type curved flat. Finally two permutability theorems of $c-$polar transforms are established.\\

{\bf Keywords:} $(\pm)-$timelike isothermic surfaces, curved flats, Darboux transforms, polar transforms as generalized Christoffel transforms,
spectral transforms \\

{\bf MSC2010:} 53A30,  53B30 \\

\section{Introduction}

The study of isothermic surfaces in $\mathbb{R}^3$ can be tracked back to the early develop of the classical differential geometry.
It was revealed only 30 years ago in \cite{CGS} that,
there is a structure of integrable system underlying the theory
about isothermic surfaces, which has its root in the transforms of isothermic surfaces. This theory was explained later as a special kind of curved flat in \cite{BHPP} and as a $G^1_{m,1}-$system in \cite{BDPT} (see also \cite{Terng}).  For a complete theory concerning the geometry and integrable system of isothermic surfaces in $\mathbb{R}^n$ as well as the interpretation that what is an integrable system, we refer to  \cite{Bur}.

In recent years, the integrable system theory of isothermic surfaces is used to the study transforms of timelike isothermic surfaces in pseudo-Riemannian space forms (\cite{Dussan-M2005}, \cite{Dussan-M2006}, \cite{Mag}, \cite{ZCC}), mainly using the methods
developed by Burstall in \cite{Bur} and Bruck et al in
\cite{BDPT}.

On the other hand, a geometric observation initiated in the works of \cite{Ma-W2}, \cite{Wangpeng2}, where it was shown that there exists a new kind of
 natural transforms for spacelike surfaces in n-dimensional Lorentzian space
forms. The key observation is that any parallel normal section of the conformal flat normal bundle yields a conformal map.
 So we can define the polar transforms
locally. The new isothermic surface produced in this way is neither
the spectral transform nor the Darboux transform of the original isothermic surface. It is the generalization of the Christoffel transforms for isothermic surfaces.

We also note that in \cite{Smyth}, the globally isothermic surfaces, named also as soliton surfaces, appeared in the mechanical equilibrium of closed membranes. Since timelike surfaces arise naturally in the study of surfaces in Lorentzian space forms, it does make sense to have a detailed study on the geometry of timelike isothermic surfaces in pseudo-Riemannian space forms. Moreover, some new examples of global isothermic surfaces are also presented in our paper, see Section 2.3.

In this paper, we want to provide a similar theory concerning the conformal geometry and integrable system of timelike isothermic surfaces in pseudo-Riemannian space forms, following the treatment in \cite{BPP}, \cite{Bur} and \cite{BHPP}.
For this purpose, we first derive the surfaces theory for timelike surfaces in pseudo-Riemannian space forms. Then we unify these into the conformal geometry of timelike surfaces in the projective light cone, following the framework in \cite{BPP}. After these preparation, we define the $c-$polar transforms of timelike $(\pm)-$isothermic surfaces and prove that  $c-$polar surfaces are also timelike $(\pm)-$isothermic surfaces when they are immersions. Moreover, we derive a geometric description of the Darboux transforms for timelike $(\pm)-$isothermic surfaces. As an application, we show that a Darboux pair of timelike $(\pm)-$isothermic surfaces is equivalent to a Lorentzian $O(n-r+1,r+1/O(n-r,r)\times O(1,1))-$ curved flat.
Since $c-$polar transforms can be viewed as generalized Christoffel transforms, it is natural to expect that $c-$polar
transforms commute with the spectral transform and the Darboux
transform. Similar to \cite{Ma-W2}, we obtain two of such
\emph{permutability theorems}, see Section 5.

This paper is organized as follows. In Section~2  we
 derive the main theory of timelike surfaces both in the isometric case and in the conformal case, together with the relations between them. The definition and basic properties of
isothermic surfaces are also discussed here. A Bonnet-type theorem is also provided. Then we introduce
the $c-$polar transforms of timelike isothermic surfaces and the description of them in Section~3.
In Section 4 we prove that a
Darboux pair of timelike isothermic surfaces are equivalent to a $O(n-r+1,r+1/O(n-r,r)\times O(1,1))-$ curved flat.
Finally, we provide a brief proof of the permutability theorems between $c-$polar transforms and the spectral transforms, the Darboux
transforms in Section~5.

\section{Surface Theory for timelike isothermic surface}

In this section we first review the timelike surface theory in space forms. Then we consider the conformal geometry of timelike surface theory
in pseudo-Riemannian space forms. Furthermore, the relations between the isometric invariants and the
conformal invariants of a timelike surface are established. Finally we end this section by providing some new examples of timelike isothermic surfaces.

\subsection{Timelike isothermic surfaces in $N^n_r(c)$ }

Let $\R^m_s$ be the linear space $\R^m$ equipped with the quadric form
\begin{align*}
&\langle x,x\rangle=\sum_1^{m-s}x_i^2-\sum_{m-s+1}^{m}x_i^2.
\end{align*}
Let $N^n_r(c)$ be the $n$-dimensional pseudo-Riemannian space form with constant
curvature $c,c=0,1,-1$, with its pseudo-Riemannian metric of signature $(n-r,r),n-r\geq 2,r\geq1.$
It is well-known that $N^n_r(c)$ can be described as
\begin{equation*}
\left\{\begin {array}{lllll}
x\in N^n_r(c)=\R^n_r,\hskip 5pt c=0,\\
x\in N^n_r(c)=S^n_r\subset \R^{n+1}_r, \hskip 5pt c=1,\\
x\in N^n_r(c)=H^n_r\subset \R^{n+1}_r, \hskip 5pt c=-1,
\end {array}\right.
\end{equation*}

We recall the basic properties of timelike surfaces. A surface $x:M\rightarrow N^n_r(c)$ is called a timelike surface or Lorentzian surface if the induced metric $x^*\langle,\rangle$ on $M$ is a Lorentzian metric. For the two dimensional Lorentzian surface $x$, there are two (conformally) invariant lightlike directions and integration along them gives an (locally) asymptotic coordinate system on $M$.  Now let  $x:M\rightarrow N^n_r(c)$ be an oriented timelike surface with local asymptotic coordinate $(u,v)$ and metric $|dx|^2=e^{2\omega}dudv$.  Choose an orthonormal frame  $\{n_\alpha\}$ of the normal bundle with $\langle n_\alpha,n_\alpha\rangle=\varepsilon_{\alpha}=\pm1$, and let $D_u$ and $D_v$ denote the normal connection.

The structure equations are as follows
\begin{equation}\label{eq-moving0}
\left\{\begin {array}{lllll}
x_{uu}=2\o_u x_u+\O_1,\\
x_{vv}=2\o_v x_v+\O_2,\\
x_{uv}=\frac{1}{2}e^{2\o}H-\frac{c}{2}e^{2\o}x,\\
(n_{\a})_u=D_u n_{\a}-\langle n_{\a},H\rangle x_u-2e^{-2\o}\langle n_{\a},\O_1\rangle x_v,\\
(n_{\a})_v=D_v n_{\a}-\langle n_{\a},H\rangle x_v-2e^{-2\o}\langle n_{\a},\O_2\rangle x_u,
\hskip 5pt \a=3,...,n.
\end {array}\right.
\end{equation}
Here  $H=\sum_\alpha h_\alpha n_\alpha$ is the mean curvature vector and the two $\Gamma(T^{\perp}M)-$valued 2-forms $\Omega_1du^2=\sum_{\alpha=3}^{n}\Omega_{1\alpha}n_{\alpha}dz^2$ and $\Omega_1du^2=\sum_{\alpha=3}^{n}\Omega_{1\alpha}n_{\alpha}dz^2$ are the vector-valued {\em Hopf differentials}. The Gauss equation, Codazzi equations and Ricci equations as integrable equations are as below
\begin{equation}\label{eq-integrable0}
\left\{\begin {array}{lllll}
\langle H,H\rangle-K+c=4e^{-4\o}\langle \O_1,\O_2\rangle,\\
D_u H=2e^{-2\o}D_v\O_1,\hskip 5pt D_v H=2e^{-2\o}D_u\O_2,\\
R^D_{uv}n_{\a}:=D_v D_u n_{\a}-D_v D_u n_{\a}=2e^{-2\o}(\langle n_{\a},\O_1\rangle \O_2-\langle n_{\a},\O_2\rangle \O_1).
\end {array}\right.
\end{equation}

There are several equivalent definition of timelike isothermic surfaces, concerning different properties of them. Here first we define the $(\pm)-$timelike isothermic surfaces following the terms of Fujioka and Inoguchi.
\begin{definition}\cite{F-I}
Let $x:M\rightarrow N^{n}_{r}(c)$ be a timelike surface. It is called $(\pm)-$isothermic if around each
point of $M$ there exists an asymptotic coordinate $(u,v)$ such that the vector-valued Hopf differentials $\Omega_1=\pm\Omega_2$. Such coordinate $(u,v)$ is called an adapted (asymptotic) coordinate.
\end{definition}

Another equivalent definition is given by Dussan and Magid in \cite{Dussan-M2005}, the so-called {\em timelike real isothermic and complex isothermic surfaces}.
\begin{definition}\cite{Dussan-M2005}
Let $x:M\rightarrow N^{n}_{r}(c)$ be a timelike surface. Suppose that the normal bundle of $x$ is flat and that $n_{\alpha}$ is a parallel orthonormal frame of the normal bundle. Then
$x$ is called real isothermic surface, if there exist coordinates  $(\tilde{u},\tilde{v})$, such that its first and second fundamental forms is of the form
\begin{equation}
I=e^{2\omega}(d\tilde{u}^2-d\tilde{v}^2),\ II=\sum_{\alpha}(b_{1\alpha}d\tilde{u}^2-b_{2\alpha} d\tilde{v}^2)n_{\alpha}.\end{equation}
And $x$ is called complex isothermic surface, if there exist coordinates $(\tilde{u},\tilde{v})$, such that the first and second fundamental forms of $x$ is of the form
\begin{equation}
I=e^{2\omega}(d\tilde{u}^2-d\tilde{v}^2),\ II=\sum_{\alpha}(b_{1\alpha}(d\tilde{u}^2-d\tilde{v}^2)+b_{0\alpha} d\tilde{v}^2)n_{\alpha}.\end{equation}
\end{definition}

From the structure equations above, it is direct to see the equivalence between these two definitions.
\begin{lemma} For a timelike surface, it is $(+)-$isothermic if and only if it is real isothermic. It is $(-)-$isothermic if and only if it is complex isothermic.
\end{lemma}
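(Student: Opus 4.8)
\textit{Proof proposal.}

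The plan is to run both descriptions through the structure equations \eqref{eq-moving0}. In an asymptotic coordinate $(u,v)$ with $|dx|^2=e^{2\omega}\,du\,dv$, one reads off from \eqref{eq-moving0} that the components of $x_{uu},x_{uv},x_{vv}$ normal to $M$ inside $N^n_r(c)$ are $\Omega_1$, $\tfrac12 e^{2\omega}H$, $\Omega_2$ respectively (when $c\neq0$ the extra term $-\tfrac c2 e^{2\omega}x$ is normal to $N^n_r(c)$ itself and does not enter the second fundamental form of $M$ in $N^n_r(c)$), so the fundamental forms of $x$ are
\[
 I=e^{2\omega}\,du\,dv,\qquad II=\Omega_1\,du^2+e^{2\omega}H\,du\,dv+\Omega_2\,dv^2 .
\]
If moreover $(u,v)$ is adapted, i.e. $\Omega_1=\varepsilon\,\Omega_2$ with $\varepsilon=\pm1$, then the Ricci equation in \eqref{eq-integrable0} gives $R^D_{uv}n_\alpha=2e^{-2\omega}(\langle n_\alpha,\Omega_1\rangle\Omega_2-\langle n_\alpha,\Omega_2\rangle\Omega_1)=0$; hence a $(\pm)$-isothermic surface has flat normal bundle and admits a parallel orthonormal frame $\{n_\alpha\}$ — precisely the standing assumption in the real/complex isothermic definition. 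Fix such a frame and write $\Omega_1=\sum_\alpha\Omega_\alpha n_\alpha$ (so $\Omega_2=\varepsilon\sum_\alpha\Omega_\alpha n_\alpha$) and $H=\sum_\alpha h_\alpha n_\alpha$.

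The crux is to rotate $(u,v)$ into a conformal coordinate $(\tilde u,\tilde v)$ by $u=\tilde u+\tilde v$, $v=\tilde u-\tilde v$, so that $du\,dv=d\tilde u^2-d\tilde v^2$ and hence $I=e^{2\omega}(d\tilde u^2-d\tilde v^2)$. Substituting $du=d\tilde u+d\tilde v$, $dv=d\tilde u-d\tilde v$ into $II$ and collecting terms gives
\[
 II=(\Omega_1+\Omega_2+e^{2\omega}H)\,d\tilde u^2+2(\Omega_1-\Omega_2)\,d\tilde u\,d\tilde v+(\Omega_1+\Omega_2-e^{2\omega}H)\,d\tilde v^2 .
\]
When $\varepsilon=+1$ the $d\tilde u\,d\tilde v$-term vanishes and $II=\sum_\alpha\big((2\Omega_\alpha+e^{2\omega}h_\alpha)\,d\tilde u^2+(2\Omega_\alpha-e^{2\omega}h_\alpha)\,d\tilde v^2\big)n_\alpha$, which is the real isothermic normal form with $b_{1\alpha}=2\Omega_\alpha+e^{2\omega}h_\alpha$ and $b_{2\alpha}=e^{2\omega}h_\alpha-2\Omega_\alpha$. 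When $\varepsilon=-1$ one has $\Omega_1+\Omega_2=0$, so $II=\sum_\alpha\big(e^{2\omega}h_\alpha(d\tilde u^2-d\tilde v^2)+4\Omega_\alpha\,d\tilde u\,d\tilde v\big)n_\alpha$, i.e. a multiple of $I$ plus the off-diagonal term $4\Omega\,d\tilde u\,d\tilde v$ that is the hallmark of the complex case; this is the complex isothermic normal form of \cite{Dussan-M2005}. Hence $(+)$-isothermic $\Rightarrow$ real isothermic and $(-)$-isothermic $\Rightarrow$ complex isothermic.

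For the converses I would run the substitution backwards: given the real (resp. complex) isothermic normal form in a coordinate $(\tilde u,\tilde v)$, put $u=\tilde u+\tilde v$, $v=\tilde u-\tilde v$; since $d\tilde u^2-d\tilde v^2=du\,dv$, the coordinate $(u,v)$ is asymptotic, and comparing the $du^2$- and $dv^2$-coefficients of $II$ against $\Omega_1\,du^2+e^{2\omega}H\,du\,dv+\Omega_2\,dv^2$ yields $\Omega_1=\Omega_2$ (resp. $\Omega_1=-\Omega_2$): in the real case $II$ has no $d\tilde u\,d\tilde v$-component, while in the complex case $II$ is a multiple of $I=e^{2\omega}(d\tilde u^2-d\tilde v^2)$ plus a multiple of $d\tilde u\,d\tilde v=\tfrac14(du^2-dv^2)$, the first contributing equally and the second oppositely to those two coefficients. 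So $(u,v)$ is adapted and $x$ is $(+)$- (resp. $(-)$-) isothermic, which closes all four implications. I do not expect a genuine obstacle here: the argument is a linear substitution in the fundamental forms together with the one-line Ricci computation. The only place needing care is the $\varepsilon=-1$ case, where one must check that the off-diagonal term $4\Omega_\alpha\,d\tilde u\,d\tilde v$ above matches, coefficient for coefficient, the normal form recorded in \cite{Dussan-M2005}; any residual normalisation is absorbed by a reparametrisation $u\mapsto f(u),\ v\mapsto g(v)$ with $f'=\pm g'$, which preserves adaptedness.
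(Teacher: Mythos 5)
Your proof is correct and follows essentially the same route as the paper's: flatness of the normal bundle from the Ricci equation in \eqref{eq-integrable0}, then the linear substitution $u=\tilde u+\tilde v$, $v=\tilde u-\tilde v$ applied to $I$ and $II$; you are in fact more explicit than the paper, which only records the resulting normal forms and leaves the converse direction implicit. The one point worth flagging is that your cross-term $4\Omega_\alpha\,d\tilde u\,d\tilde v$ in the $(-)$ case matches the form the paper actually derives in its proof (with a $ds\,dt$ term), rather than the literal $b_{0\alpha}\,d\tilde v^2$ appearing in Definition~2.2, which seems to be a misprint there.
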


\begin{proof}$\O_1=\pm\O_2$ together with the conformal Ricci equations in (2) shows that the normal bundle
of $x$ is flat. This is an important property of isothermic surfaces, which guarantees that all shape
operators commute and the curvature lines could still be defined. Setting $u=s+t$, $v =s-t$, the
two fundamental forms of an isothermic surface, with respect to some parallel normal frame $\{n_{\a}\}$,
are of the form
\begin{align}
I=e^{2\o}(ds^2-dt^2),\hskip 5pt II=\sum_{\a}(b_{\a 1}ds^2-b_{\a 2}dt^2)n_{\a}
\end{align}
if $x$ is $(+)-$ isothermic and
\begin{align}
I=e^{2\o}(ds^2-dt^2),\hskip 5pt II=\sum_{\a}(b_{\a 1}(ds^2-dt^2)-2b_{\a 2}dsdt)n_{\a}
\end{align}
if $x$ is $(-)-$ isothermic.
\end{proof}
\par

Since timelike isothermic surfaces are also conformally invariant as the spacelike cases, it will be better to deal with them by using a conformally invariant frame. This idea leads the next subsection.

\subsection{Timelike isothermic surfaces in $Q^n_r$ }
Let $C^{m-1}_s$ be the light cone of $\R^m_s$. Here $\R^m_s$ is the same space defined in Section 2.1. The metric of $\R^{n+2}_{r+1}$ induces a $(n-r,r)-$ type pseudo-Riemannian metric on
\begin{align*}
S^{n-r}\times S^r=\{x\in\R^{n+2}_{r+1}|\sum_{i=1}^{n-r+1}x_i^2=\sum_{i=n-r+2}^{n+2}x_i^2=1\}\subset C^{n+1}_r\setminus\{0\},
\end{align*}
which can be written as $g(S^{n-r})\oplus(-g(S^r))$ with $g(S^{n-r})$ and $g(S^r)$ the standard Riemannian metrics on $S^{n-r}$ and $S^r$.
Let
\begin{align*}
Q^n_r=\{[x]\in\R P^{n+1}|x\in C^{n+1}_r\setminus\{0\}\}
\end{align*}
be the projective light cone. The projection $\pi:S^{n-r}\times S^r\to Q^n_r$ therefore induces a conformal structure $[h]$ on $Q^n_r$, with $h$ locally a $(n-r,r)-$type pseudo-Riemannian metric.
The conformal group of $(Q^n_r,[h])$ is the orthogonal
group $O(n-r+1,r+1)$ which keeps the inner product of $\R^{n+2}_{r+1}$
invariant and acts on $Q^n_r$ by
\begin{equation}\label{eq-action}T([x])=[xT],\ T\in O(n-r+1,r+1).\end{equation}
The space forms $N^n_r(c)$, $c=0,1,-1$ can be conformally embedded as a proper subset of $Q^n_r$ via
\begin{align}
x\mapsto \left(\frac{\langle x,x\rangle-1}{2},x,\frac{\langle x,x\rangle+1}{2}\right).
\end{align}

Basic conformal surface theory shows that for a timelike surface $y:M\to Q^n_r$ with local
asymptotic coordinate $(u,v)$, there exists a local lift $Y:U\to C^{n+1}_r\setminus\{0\}$
of $y$ such that $\langle dY,dY\rangle=\frac{1}{2}(du\otimes dv+dv\otimes du)$ in an open
subset $U$ of $M$. We denote by
\begin{align}
V=\mathrm{Span}\{Y,Y_u,Y_v,Y_{uv}\}
\end{align}
the conformal tangent bundle, and by $V^{\bot}$ its orthogonal complement
or the conformal normal bundle. Set $ N=2Y_{uv}+2\langle \kappa_1,\kappa_2\rangle Y ,$
we have $$\langle Y, N\rangle=-1,\ \langle Y_u, N\rangle= \langle Y_v, N\rangle=\langle N, N\rangle=0.$$
Let $D$ denote the normal connection and let $\psi\in\Gamma(V^{\bot})$ be
an arbitrary section. Then the structure equation of $y$ can be derived as follows \cite{Wangpeng}
\begin{equation}\label{eq-moving1}
\left\{\begin {array}{lllll}
Y_{uu}=-\frac{s_1}{2}Y+\kappa_1,\\
Y_{vv}=-\frac{s_2}{2}Y+\kappa_2,\\
Y_{uv}=-\langle \kappa_1,\kappa_2\rangle Y+\frac{1}{2}N,\\
N_u=-2\langle \kappa_1,\kappa_2\rangle Y_u-s_1Y_v+2D_v \kappa_1,\\
N_v=-s_2Y_u-2\langle \kappa_1,\kappa_2\rangle Y_v+2D_u \kappa_2,\\
\psi_u=D_u\psi+2\langle\psi,D_v \kappa_1\rangle Y-2\langle \psi,\kappa_1\rangle Y_v,\\
\psi_v=D_v\psi+2\langle\psi,D_u \kappa_2\rangle Y-2\langle \psi,\kappa_2\rangle Y_u.\\
\end {array}\right.
\end{equation}
The first two equations above define four basic invariants $\kappa_1$, $\kappa_2$ and $s_1$, $s_2$ dependent on
$(u,v)$. Similar to the
case in M\"{o}bius
geometry, $k_i$, and $s_i$ are called the conformal Hopf differential and the Schwarzian
derivative of $y$, respectively (see  \cite{BPP}, \cite{Ma1} and \cite{Wangpeng}).
\par
The conformal Hopf differential defines a conformal invariant metric
\begin{equation}
g:=\langle \kappa_1,\kappa_2\rangle dudv
\end{equation}
and the Willmore functional
\begin{equation}
W(y):=\frac{1}{2}\int_M\langle \kappa_1,\kappa_2\rangle dudv
\end{equation}
For more details on timelike Willmore surfaces we refer to \cite{DWang}.
The conformal Gauss Codazzi and Ricci equations as integrable conditions are:
\begin{equation}\label{eq-inte1}
\left\{\begin {array}{lllll}
\frac{(s_1)_v}{2}=3\langle D_u \kappa_2,\kappa_1\rangle+\langle D_u \kappa_1,\kappa_2\rangle,\\
\frac{(s_2)_u}{2}=\langle \kappa_1, D_v \kappa_2\rangle+3\langle D_v \kappa_1,\kappa_2\rangle,
\end {array}\right.
\end{equation}
\begin{equation}\label{eq-inte2}
D_v D_v \kappa_1+\frac{s_2 \kappa_1}{2}=D_u D_u \kappa_2+\frac{s_1 \kappa_2}{2}
\end{equation}
\begin{equation}\label{eq-inte3}
R^D_{uv}\psi:=D_v D_u\psi-D_u D_v\psi=2\langle \psi,\kappa_1\rangle \kappa_2-2\langle \psi,\kappa_2\rangle \kappa_1.
\end{equation}

Let $(\tilde{u}=\tilde{u}(u),\tilde{v}=\tilde{v}(v))$ be another asymptotic coordinate with $\langle Y_{\tilde{u}},Y_{\tilde{v}}\rangle=\frac{1}{2}e^{2\rho}$,
then $\tilde{Y}=e^{-\rho}Y$, and correspondingly,
\begin{equation}\tilde\kappa_1\frac{d\tilde{u}^2}{ \sqrt{d\tilde{u}d\tilde{v}} }=\kappa_1\frac{du^2}{ \sqrt{dudv} },\ \ \tilde\kappa_2\frac{d\tilde{v}^2}{ \sqrt{d\tilde{u}d\tilde{v}} }=\kappa_2\frac{dv^2}{ \sqrt{dudv} }.
\end{equation}
And
\begin{equation}
\tilde{s}_1=s_1(\frac{\partial u}{\partial\tilde{u}})^2+2\rho_{\tilde{u}\tilde{u}}-2(\rho_{\tilde{u}})^2,
\end{equation}
\begin{equation}
\tilde{s}_2=s_2(\frac{\partial v}{\partial\tilde{v}})^2+2\rho_{\tilde{v}\tilde{v}}-2(\rho_{\tilde{v}})^2,
\end{equation}

With these at hand, we can state the conformal version of Bonnet theorem on the existence and uniqueness of conformally timelike surface as follows. (This is directly derived as the usual submanifold theory. Since we can not find a reference for Bonnet theorem of timelike surfaces, we write down it here for convenience.):
\begin{theorem}\label{Bonnet} For an oriented timelike surface $M$ with local local
asymptotic coordinate $(u,v)$, the differentials $\frac{du^2}{ \sqrt{dudv}} $ and  $\frac{dv^2}{ \sqrt{dudv}} $ define two real line bundles $L_1$ and $L_2$ over $M$.

For a timelike surface $M$ with local
asymptotic coordinate $(u,v)$, let $NM$ be an $n-2$ dimensional flat sub-vector bundle of the trivial bundle $M\times \mathbb{R}^{n+2}_{r+1}$ over $M$,   with an induced $(n-1,r-1)-$type flat pseudo Riemannian metric on each fiber of $NM$. Denote by $D_{u}$ and $D_v$ the connection over $NM$. Chose a section $\kappa_1\in \Gamma(NM\otimes L_2)$ and a section $\kappa_2\in \Gamma(NM\otimes L_2)$, and let $s_1$, $s_2$ be Schwarzians defined on $M$. If $\kappa_1,\ \kappa_2, \ s_1, \ s_2, D_u, D_v$ satisfy the integrable equations \eqref{eq-inte1}, \eqref{eq-inte2},\ \eqref{eq-inte3}, then there exists a conformal immersion $y:M\rightarrow Q^n_r$ with conformal data $\{\kappa_1,\ \kappa_2, \ s_1, \ s_2, D_u, D_v, NM\}$ as given.

Moreover, let $y,\hat{y}:M\to Q^n_r$ be two timelike surfaces with (same) local
asymptotic coordinate $(u,v)$. Assume that for some canonical lift $Y$ of $y$ and $\hat{Y}$ of $\hat{y}$:

(i). $Y$ and $\hat{Y}$ have the same Schwarzians, that is, $s_1=\hat{s}_1$, $s_2=\hat{s}_2$.

(ii).There exist normal frames $\{\phi_{\alpha}\}$ of $y$ and normal frames $\{\hat\phi_{\alpha}\}$ of $\hat{y}$, such that for these frames, $$D_u\psi_{\alpha}=D_{u}\hat{\psi}_{\alpha},\ D_v\psi_{\alpha}=D_{v}\hat{\psi}_{\alpha}.$$

(iii). Under the frames (ii), the Hopf differential are of the form
$$\kappa_1=\sum_{\alpha}k_{1\alpha}\psi_{\alpha},\ \kappa_2=\sum_{\alpha}k_{2\alpha}\psi_{\alpha},\ \hat{\kappa}_1=\sum_{\alpha}k_{1\alpha}\hat{\psi}_{\alpha},\ \hat{\kappa}_2=\sum_{\alpha}k_{2\alpha}\hat{\psi}_{\alpha}.$$
Then there is a M\"{o}bius transformation (See \eqref{eq-action}) $T\in O(n+2,r+1):Q^n_r\rightarrow Q^n_r$, such that  $Ty=\hat{y}.$
\end{theorem}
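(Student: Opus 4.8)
The statement has three parts: that $\tfrac{du^2}{\sqrt{dudv}}$ and $\tfrac{dv^2}{\sqrt{dudv}}$ define real line bundles $L_1,L_2$, a fundamental existence theorem, and a rigidity statement. The first is immediate: any two asymptotic coordinate systems on an oriented timelike surface are related (possibly after interchanging the two null directions) by $\tilde u=\tilde u(u)$, $\tilde v=\tilde v(v)$ with $\tfrac{du}{d\tilde u}\tfrac{dv}{d\tilde v}>0$, so $\tfrac{du^2}{\sqrt{dudv}}$ is a \emph{positive} real multiple of $\tfrac{d\tilde u^2}{\sqrt{d\tilde u\,d\tilde v}}$ (and likewise for $L_2$), whence the transition functions are well defined and positive; this is consistent with the transformation rules recorded above. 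The other two parts I would prove by the classical moving-frame argument, transported to the pseudo-Riemannian projective light cone.

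For existence, fix---locally, or globally if $M$ is simply connected---an orthonormal frame $\{\psi_\alpha\}_{\alpha=3}^n$ of $NM$ with $\langle\psi_\alpha,\psi_\beta\rangle=\e_\alpha\delta_{\alpha\beta}$, and assemble the matrix of unknowns $F=(Y,Y_u,Y_v,N,\psi_3,\dots,\psi_n)$. The structure equations \eqref{eq-moving1} then become a linear system $F_u=FA$, $F_v=FB$, with $A,B$ explicit $(n+2)\times(n+2)$ matrix functions of $(u,v)$ built from $s_1,s_2$, from $\langle\kappa_1,\kappa_2\rangle$, $\langle\psi_\alpha,\kappa_i\rangle$, $\langle\psi_\alpha,D_v\kappa_1\rangle$, $\langle\psi_\alpha,D_u\kappa_2\rangle$, and from the connection coefficients of $D$ in $\{\psi_\alpha\}$. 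Two things need checking. First, that $A$ and $B$ lie in $\mathfrak o(n-r+1,r+1)$ relative to the constant Gram matrix $\mathcal G$ of the frame (namely $\langle Y,N\rangle=-1$, $\langle Y_u,Y_v\rangle=\tfrac12$, $\langle\psi_\alpha,\psi_\beta\rangle=\e_\alpha\delta_{\alpha\beta}$, all remaining inner products zero), which is read straight off the shape of \eqref{eq-moving1}. Second, that the compatibility identity $A_v-B_u+[A,B]=0$ holds; I would verify by a direct expansion that this is \emph{exactly} the conformal Gauss, Codazzi and Ricci equations \eqref{eq-inte1}, \eqref{eq-inte2} and \eqref{eq-inte3}. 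Granting both, the Frobenius theorem produces on a simply connected domain a unique $F$ with prescribed initial frame of Gram matrix $\mathcal G$; since $A,B\in\mathfrak o(n-r+1,r+1)$ the solution keeps $\mathcal G$, so $\langle Y,Y\rangle\equiv0$ (hence $Y$ lands in $C^{n+1}_r$) and $Y_u,Y_v$ are linearly independent modulo $Y$. Setting $y:=[Y]$ gives a timelike immersion into $Q^n_r$, and reading $F_u=FA$, $F_v=FB$ back off shows $Y$ is a canonical lift for $y$ whose conformal tangent and normal bundles, normal connection, conformal Hopf differentials and Schwarzians are precisely the prescribed data.

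For uniqueness, attach to $y$ and $\hat y$ the moving frames $F=(Y,Y_u,Y_v,N,\psi_\alpha)$ and $\hat F=(\hat Y,\hat Y_u,\hat Y_v,\hat N,\hat\psi_\alpha)$ built from the canonical lifts of the hypotheses and the normal frames of (ii). Conditions (i)--(iii) force every coefficient on the right of \eqref{eq-moving1} to agree for the two surfaces in these frames: the Schwarzians by (i); the connection matrices $\langle D_u\psi_\alpha,\psi_\beta\rangle$, $\langle D_v\psi_\alpha,\psi_\beta\rangle$ by (ii) together with $\langle\psi_\alpha,\psi_\beta\rangle=\e_\alpha\delta_{\alpha\beta}=\langle\hat\psi_\alpha,\hat\psi_\beta\rangle$; and the contractions of $\kappa_1,\kappa_2$ by (iii), after expanding $D_v\kappa_1=\sum_\alpha\big((k_{1\alpha})_v\psi_\alpha+k_{1\alpha}D_v\psi_\alpha\big)$ and similarly for the hatted data. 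Hence $F$ and $\hat F$ solve one and the same system $F_u=FA$, $F_v=FB$, so $T:=\hat F F^{-1}$ satisfies $T_u=\hat FAF^{-1}-\hat FAF^{-1}=0$ and likewise $T_v=0$; thus $T$ is constant, and it lies in $O(n-r+1,r+1)$ because $F$ and $\hat F$ share the Gram matrix $\mathcal G$. Reading off the first column, $\hat Y=T\!\cdot\!Y$, i.e.\ $\hat y=Ty$ under the action \eqref{eq-action}.

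The only genuine obstacle I anticipate is the second check in the existence step: matching the matrix identity $A_v-B_u+[A,B]=0$ entry by entry with \eqref{eq-inte1}--\eqref{eq-inte3}, in particular the entries coupling $N$ with $Y_u,Y_v$ and the terms involving $\langle\psi_\alpha,D_v\kappa_1\rangle$ and $\langle\psi_\alpha,D_u\kappa_2\rangle$. Everything else is formal bookkeeping or a line-by-line transcription of the spacelike computation in \cite{BPP} and \cite{Wangpeng}, with the sign changes forced by the $\e_\alpha$ and by the Lorentzian asymptotic coordinate. A minor point worth stating carefully in the rigidity part is that the frames of (ii) belong a priori to different subbundles $NM$ and $\widehat{NM}$ of $M\times\R^{n+2}_{r+1}$; but since these carry the same signature and the identification $\psi_\alpha\leftrightarrow\hat\psi_\alpha$ matches the connection coefficients, the two structure systems become literally the same pair of matrices $A,B$, which is all the argument uses.
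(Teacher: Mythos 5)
The paper offers no proof of this theorem at all --- it is stated with only the parenthetical remark that it ``is directly derived as the usual submanifold theory'' --- and your proposal supplies precisely that standard moving-frame/Frobenius argument (coefficient matrices in $\mathfrak{o}(n-r+1,r+1)$ relative to the constant Gram matrix, zero-curvature condition equivalent to \eqref{eq-inte1}--\eqref{eq-inte3}, constant $T=\hat F F^{-1}$ for rigidity), so it is correct and follows the approach the paper implicitly invokes. One notational slip: with the convention $F_u=FA$, $F_v=FB$ the compatibility condition is $A_v-B_u-[A,B]=0$ rather than $A_v-B_u+[A,B]=0$, which does not affect the argument.
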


Now let us turn to timelike isothermic surfaces. A timelike surface  $y:M\to Q^n_r$ is called $(\pm)-$isothermic if its Hopf differentials satisfy that
$\kappa_1=\pm \kappa_2$ with respect to some asymptotic coordinate $(u,v)$.
\par
To show the equivalence of this definition with the previous ones, we need to derive the relations between the isothermic invariants and conformal invariant of
a timelike surface $x:M\to N_r^n(c)$. We verify this for the case $c=0$. The other cases are omitted.
Let
\begin{equation}
X=\left(\frac{\langle x,x\rangle -1}{2},x,\frac{\langle x,x\rangle +1}{2}\right).
\end{equation}
Then $y=[X]$ is the corresponding surface into $Q^n_r$. It is direct to obtain a canonical lift
$Y=e^{-\o}X$ with respect to $(u,v)$. So
\begin{equation}\label{eq-trans1}
\left\{\begin {array}{lllll}
Y &=\frac{1}{2}e^{-\o}(\langle x,x\rangle -1,2x,\langle x,x\rangle +1),\\
Y_u&=-\o_u Y+e^{-\o}(\langle x_u,x\rangle,x_u,\langle x_u,x\rangle),\\
Y_v&=-\o_v Y+e^{-\o}(\langle x_v,x\rangle,x_v,\langle x_v,x\rangle)\\
N&=e^{\o}(\langle H,x\rangle+1,H,\langle H,x\rangle+1)\\
& \ \ -2\o_u Y_v-2\o_v Y_u+2(\langle \kappa_1,\kappa_2\rangle-\o_{uv}-\o_u\o_v)Y,\\
\psi_{\a}&=e^{-\o}(\langle n_{\a},x\rangle,n_{\a},\langle n_{\a},x\rangle)+\langle n_{\a},H\rangle Y.
\end {array}\right.
\end{equation}
Then we see that the normal connection of $\psi_{\a}$ is the same as the normal
connection of $\{n_{\a}\}$. We also calculate the Schwarzian derivatives and conformal differentials as
\begin{equation}\label{eq-trans2}
\left\{\begin {array}{lllll}
s_1&=2\o_{uu}-2\o_u^2+2\langle \O_1,H\rangle,\\
s_2&=2\o_{vv}-2\o_v^2+2\langle \O_2,H\rangle,\\
\kappa_1&=e^{-\o}(\langle \O_1,x\rangle,\O_1,\langle \O_1,x\rangle)+\langle \O_1,H\rangle Y,\\
\kappa_2&=e^{-\o}(\langle \O_2,x\rangle,\O_2,\langle \O_2,x\rangle)+\langle \O_2,H\rangle Y.
\end {array}\right.
\end{equation}
Therefore $\kappa_1=\pm \kappa_2$ if and only if $\O_1=\pm\O_2$. So a timelike
$(\pm)-$ isothermic surface in $N^n_r(c)$ can be conformally embedded into $Q^n_r$
and must be a conformal timelike
$(\pm)-$ isothermic surface in $Q^n_r$.

\subsection{Examples}

\begin{example}Let $x$ be a timelike $H=0$ surface or a  CMC $H$ timelike surface in a 3-dimensional Lorentzian space form. If $<\Omega_1,\Omega_2>\neq0$, then $x$ is isothermic (compare \cite{Ma1}, \cite{Hong}, \cite{In}, \cite{Wangpeng}). To see this, from the codazzi equation of $x$ in (2), we have
$$D_v\Omega_1=0,\ ~~~~~~D_u \Omega_2=0.$$
So if we assume that  $\Omega_1=a_1n$, $\Omega_2=a_2n$. Then $a_1a_2\neq0$ and $a_1=a_1(u)$, $a_2=a_2(v)$. So by suitable coordinate changing $u\mapsto \tilde{u}=\tilde{u}(u)$ and $v\mapsto \tilde{v}=\tilde{v}(v)$, one will have the new Hopf differential $$\tilde{\Omega}_1=\pm\tilde{\Omega}_2=\pm n.$$

Note that if $<\Omega_1,\Omega_2>=0$, $x$ may not be isothermic. One can find counterexamples from the null scrolls in \cite{F-I}.
\end{example}

\begin{example} Timelike surfaces by rotation on a timelike plane:
Let $$\gamma=(\gamma_1, \gamma_2, \cdots,\gamma_{n-1},0)$$ be a timelike curve in $\R^n_r$ with $\gamma_1\neq 0$. Then a similar procedure as \cite{Ma-W2} verifies that
$$x_{\gamma}=\left(\gamma_1\cosh\theta,\gamma_2,\gamma_3,\cdots,\gamma_{n-1},\gamma_1\sinh\theta \right)$$
is a timelike $(+)-$isothermic surface.

\end{example}

\begin{example} Timelike surfaces by rotation on a spacelike plane:
Let $\gamma=(0,\gamma_2,\gamma_3,\cdots,\gamma_n)$ be a timelike curve in $\R^n_r$ with $\gamma_2\neq 0$. Then a similar procedure as \cite{Ma-W2} verifies that
$$x_{\gamma}=\left(\gamma_2\cos\theta,\gamma_2\sin\theta,\gamma_3,\cdots,\gamma_n\right)$$
is a timelike $(+)-$isothermic surface.
\end{example}

\begin{example} \cite{Ma-W3}
Set
$$e_{1}=(\cos\frac{t\theta}{\sqrt{1-t^{2}}}\cos\phi,
\cos\frac{t\theta}{\sqrt{1-t^{2}}}\sin\phi,
\sin\frac{t\theta}{\sqrt{1-t^{2}}}\cos\phi,
\sin\frac{t\theta}{\sqrt{1-t^{2}}}\sin\phi),$$
$$e_{2}=\frac{\partial e_{1}}{\partial\phi}=e_{1\phi},~
e_{3}=\frac{\sqrt{1-t^{2}}}{t}e_{1\theta},~
e_{4}=\frac{\sqrt{1-t^{2}}}{t}e_{2\theta},$$ with $t<1$. Let
\begin{equation*} \label{eq:122}
\begin{split}
Y_{t}(\theta,\phi)&: \mathbb{R}\times\mathbb{R}\longrightarrow \mathbb{R}^{6}_{2} \\
 Y_{t}(\theta,\phi)&=(e_{1},\cos\frac{\theta}{\sqrt{1-t^{2}}},\sin\frac{\theta}{\sqrt{1-t^{2}}}).
 \end{split}
 \end{equation*}
For simplicity, we omit the subscript $"_{t}"$ of $Y_{t}$. We have
that $y=[Y]:\mathbb{R}\times\mathbb{R}\rightarrow Q^{4}_{1}$ is a
timelike $(-)-$isothermic Willmore cylinder of $Q^{4}_{1}$ for any $|t|<1$ and it is a torus when $t$ is rational.

Note that $y_t$ is also a conformally homogeneous surface in $Q^4_1$. We refer to \cite{Ma-W3} for details.
\end{example}

\begin{example} \cite{Ma-W3} Set
\begin{equation*}
\begin{split}
e_{1}=&(\cosh\frac{t\theta}{\sqrt{1-t^{2}}}\cosh\phi,
\sinh\frac{t\theta}{\sqrt{1-t^{2}}}\sinh\phi,0,\\
 &  \cosh\frac{t\theta}{\sqrt{1-t^{2}}}\sinh\phi,
\sinh\frac{t\theta}{\sqrt{1-t^{2}}}\cosh\phi,0),\\
  e_{2}=&\frac{\partial e_{1}}{\partial\phi}=e_{1\phi},~
e_{3}=\frac{\sqrt{1-t^{2}}}{t}e_{1\theta},~
e_{4}=\frac{\sqrt{1-t^{2}}}{t}e_{2\theta},\\
f_{1}= &
(0,0,\sinh\frac{\theta}{\sqrt{1-t^{2}}},0,0,\cosh\frac{\theta}{\sqrt{1-t^{2}}}),f_2=\sqrt{1-t^{2}}f_{1\theta}.
\end{split}
\end{equation*}
with $0<t<1$. Let
\begin{equation*}
\begin{split}
Y_{t}(\theta,\phi)&: \mathbb{R}\times\mathbb{R}\longrightarrow \mathbb{R}^{6}_{3} \\
 Y_{t}(\theta,\phi)&:=e_{1}+f_1.
 \end{split}
 \end{equation*}
 For simplicity, we omit the subscript $"_{t}"$ of $Y_{t}$. We have
that $y=[Y]:\mathbb{R}\times\mathbb{R}\rightarrow Q^{4}_{1}$ is a
timelike $(-)-$isothermic Willmore plane of $Q^{4}_{1}$ for any $0<t<1$.
Note that $y_t$ is also a conformally homogeneous surface in $Q^4_2$. We refer to \cite{Ma-W3} for details.
\end{example}

\section{Generalized c-polar transforms as Christoffel transforms}

\begin{definition}
Let $y:M\to Q^n_r$ be a timelike isothermic surface. Suppose that $\psi$ is a parallel
section of the normal bundle of $y$ with length $c$, i.e. $\psi\in\Gamma(V^{\bot})$, $D_u\psi=0$,
$D_v\psi=0$, and $\langle\psi,\psi\rangle=c$. Then we call $\psi:M\to N^{n+1}_r(\frac{1}{c})$
a c-polar transform of $y$ when $c\neq 0$, and $[\psi]:M\to Q^N_r$ a 0-polar transform of $y$
when $c=0$.
\end{definition}
Polar transforms can also be described as below:
\begin{proposition}
$\psi:M\to\R^{n+2}_{r+1}$ derives a c-polar transform of $y$ if it satisfies the following conditions :\\
(i). $\langle\psi,Y\rangle=\langle\psi,Y_u\rangle=\langle\psi,Y_v\rangle=0$;\\
(ii). $\psi_u\in\mathrm{Span}\{Y,Y_v\}$ and $\psi_v\in\mathrm{Span}\{Y,Y_u\}$.
\end{proposition}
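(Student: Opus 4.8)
The plan is to show that the two conditions (i) and (ii) are equivalent to the defining requirements of a $c$-polar transform, namely that $\psi \in \Gamma(V^\perp)$, $D_u\psi = D_v\psi = 0$, and $\langle\psi,\psi\rangle = c$ is constant. First I would observe that since $V = \mathrm{Span}\{Y, Y_u, Y_v, Y_{uv}\}$ and $N = 2Y_{uv} + 2\langle\kappa_1,\kappa_2\rangle Y$, condition (i) together with $\langle\psi, N\rangle$ needs checking: I would compute $\langle\psi, N\rangle$ by differentiating (i). Indeed, differentiating $\langle\psi, Y_u\rangle = 0$ in $v$ and $\langle\psi, Y_v\rangle = 0$ in $u$ and using the structure equations \eqref{eq-moving1}, the mixed derivative $\langle\psi, Y_{uv}\rangle$ is forced to vanish as soon as $\langle\psi, Y\rangle = \langle\psi, Y_u\rangle = \langle\psi, Y_v\rangle = 0$. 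Hence (i) already gives $\psi \in \Gamma(V^\perp)$, i.e. $\langle\psi, Y\rangle = \langle\psi, Y_u\rangle = \langle\psi, Y_v\rangle = \langle\psi, N\rangle = 0$.

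Next I would decompose $\psi_u$ along the frame $\{Y, Y_u, Y_v, N, \psi_\alpha\}$, where $\{\psi_\alpha\}$ is an orthonormal frame of $V^\perp$. From the last structure equation block in \eqref{eq-moving1}, for any normal section one has $\psi_u = D_u\psi + 2\langle\psi, D_v\kappa_1\rangle Y - 2\langle\psi,\kappa_1\rangle Y_v$. Thus $\psi_u$ automatically has no $Y_u$-component and no $N$-component, and its $Y$- and $Y_v$-components are determined; the only remaining freedom is the $V^\perp$-component, which is exactly $D_u\psi$. Therefore the condition $\psi_u \in \mathrm{Span}\{Y, Y_v\}$ is equivalent to $D_u\psi = 0$, and symmetrically $\psi_v \in \mathrm{Span}\{Y, Y_u\}$ is equivalent to $D_v\psi = 0$. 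This is the content of condition (ii). Finally, once $D_u\psi = D_v\psi = 0$, the function $\langle\psi,\psi\rangle$ satisfies $\partial_u\langle\psi,\psi\rangle = 2\langle\psi_u,\psi\rangle = 2\langle D_u\psi + \cdots, \psi\rangle = 0$ since the $Y$- and $Y_v$-terms are orthogonal to $\psi$ by (i); likewise $\partial_v\langle\psi,\psi\rangle = 0$. Hence $\langle\psi,\psi\rangle \equiv c$ is constant, and $\psi$ (resp.\ $[\psi]$ when $c = 0$) is precisely a $c$-polar transform as in the Definition.

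I do not expect a serious obstacle here; the argument is essentially a bookkeeping exercise with the structure equations \eqref{eq-moving1}. The one point requiring a little care is the first step — verifying that (i) forces $\langle\psi, Y_{uv}\rangle = 0$ rather than having to impose it separately — which uses the compatibility of the structure equations (equivalently, the fact that differentiating the three orthogonality relations in (i) does not produce an independent fourth one, because $Y_{uv}$ is expressed through $Y$ and $N$, and $\langle\psi, N\rangle$ then drops out). I would also remark that the equivalence is stated as an ``if'', but the converse is immediate and worth noting: a $c$-polar transform visibly satisfies (i) and (ii), so (i)+(ii) in fact characterize $c$-polar transforms.
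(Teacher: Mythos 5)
There is a genuine gap in your first step, precisely at the point you flag as ``requiring a little care''. Condition (i) alone does \emph{not} force $\langle\psi,Y_{uv}\rangle=0$: the orthogonal complement of $\mathrm{Span}\{Y,Y_u,Y_v\}$ is $\mathrm{Span}\{Y\}\oplus V^{\bot}$, not $V^{\bot}$, because $Y$ itself is isotropic and orthogonal to $Y_u$ and $Y_v$. Concretely, $\psi=fY$ satisfies (i) for any function $f$, yet $\langle fY,Y_{uv}\rangle=\tfrac{f}{2}\langle Y,N\rangle=-\tfrac{f}{2}\neq 0$. Differentiating $\langle\psi,Y_v\rangle=0$ in $u$ gives $\langle\psi,Y_{uv}\rangle=-\langle\psi_u,Y_v\rangle$, and to kill the right-hand side you must invoke condition (ii): since $\psi_u\in\mathrm{Span}\{Y,Y_v\}$ and both $Y$ and $Y_v$ are orthogonal to $Y_v$, one gets $\langle\psi_u,Y_v\rangle=0$. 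This is exactly how the paper argues; $\langle\psi,N\rangle$ does not ``drop out'' of the differentiated relations coming from (i) alone --- it is controlled only through (ii).

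The error propagates into your second step: you apply the structure equation $\psi_u=D_u\psi+2\langle\psi,D_v\kappa_1\rangle Y-2\langle\psi,\kappa_1\rangle Y_v$ from \eqref{eq-moving1}, which is valid only for sections of $V^{\bot}$, before having established that $\psi\in\Gamma(V^{\bot})$. The repair is routine: either argue as the paper does (use (ii) first to obtain $\langle\psi,Y_{uv}\rangle=0$ and hence $\psi\in\Gamma(V^{\bot})$, then read off $D_u\psi=D_v\psi=0$ from (ii) again), or decompose $\psi=aY+\psi^{\bot}$ with $\psi^{\bot}\in\Gamma(V^{\bot})$ (which is what (i) actually gives) and note that $\psi_u$ then contains the term $aY_u+D_u\psi^{\bot}$, so (ii) forces both $a=0$ and $D_u\psi^{\bot}=0$. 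Your remaining observations --- that $\langle\psi,\psi\rangle$ is constant because $\psi_u,\psi_v\perp\psi$ by (i) and (ii), and that the converse implication also holds --- are correct and agree with the paper.
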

\begin{proof}
From (ii) we see that $\langle\psi_u,Y_v\rangle=0$, together with (i),
yields $\langle\psi,Y_{uv}\rangle=0$. So $\psi$ is a parallel section
of the normal bundle by (ii). Also from (ii), $\langle\psi_u,\psi\rangle=\langle\psi_v,\psi\rangle=0$,
yielding $\langle\psi,\psi\rangle=c$ for some constant $c$.
\end{proof}

 Recall the definition of the classical Christoffel transforms. The Christoffel transform of a timelike $(\pm)-$isothermic surface $x:M\rightarrow \mathbb{R}^n_r$ is defined as a map $x^C:M\rightarrow \mathbb{R}^n_r$ such that $x^C_u\parallel x_{v}$ and $x^C_v\parallel x_{u}$ (see \cite{Bur}, \cite{Wangpeng}).  So $c-$polar transforms are generalizations of Christoffel transforms.

In \cite{Bur}, Burstall introduced the notion of   generalized $H$-surface to discuss properties of isothermic surfaces and to produce new examples. For timelike surfaces one can take verbatim from \cite{Bur}.
\begin{definition} Let $x:M\to N^n_r(c)$ be a  timelike surface with mean curvature vector $H$. $x$ is called a generalized $H$-surface if there exists a parallel isoperimetric section of $x$.
Here an isoperimetric section denotes a unit normal vector field $N$ of $x$ with $\langle N,H\rangle\equiv constant$. $N$ is called a minimal section when $\langle N,H\rangle\equiv 0$.
\end{definition}

\begin{theorem}
Let $y:M\to Q^n_r$ be a timelike $(\pm)-$isothermic surface, i.e. $\kappa_1=\pm \kappa_2$. We conclude that :\\
(i). Any c-polar transform $\psi:M_0\to N^{n+1}_r(\frac{1}{c})$
is also a timelike $(\pm)-$isothermic surface if $\langle\psi,\kappa_1\rangle\neq 0$ on an open dense subset $M_0$ of $M$.
And $\psi$ shares the same
adapted coordinate with $y$. Moreover, $\psi$ is a generalized $H$-surface admitting a parallel
minimal section. To be concrete, there exits a local lift $Y^{\psi}$ of $y$ (i.e., $[Y^{\psi}]=y$) with $Y_u^{\psi}\parallel\psi_v$,
$Y_v^{\psi}\parallel\psi_u$, that is , $Y^{\psi}$ is dual to $\psi$.\\
(ii). Let $\psi$ be a c-polar transform of $y$. The conformal invariant metric of $\psi$
is of the form
\begin{equation}
g^{\psi}=\left(\langle \kappa_1,\kappa_2\rangle+\left(\frac{\langle\psi,D_u \kappa_1}{\langle\psi,\kappa_1\rangle}\right)_v\right)dudv
\end{equation}
Furthermore, suppose that $M$ is a closed surface. If $\psi$ is globally immersed.
then $W(\psi)=W(y)$, i.e., in this case, the Willmore functional is polar transform invariant.
\end{theorem}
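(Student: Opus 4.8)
The plan is to work directly with the structure equations \eqref{eq-moving1} for $y$, using the extra data of the parallel normal section $\psi$. First I would establish part (i). Since $\psi$ is a parallel section of the conformal normal bundle, Proposition~3.2 tells us that $\psi_u\in\mathrm{Span}\{Y,Y_v\}$ and $\psi_v\in\mathrm{Span}\{Y,Y_u\}$; in fact from \eqref{eq-moving1} one has $\psi_u=-2\langle\psi,\kappa_1\rangle Y_v$ (mod $Y$) and $\psi_v=-2\langle\psi,\kappa_2\rangle Y_u$ (mod $Y$), because $D_u\kappa_1$ and $D_u\kappa_2$ pair to zero with the parallel $\psi$ — wait, more carefully, $\langle\psi,D_v\kappa_1\rangle = \partial_v\langle\psi,\kappa_1\rangle$ since $D_v\psi=0$, so these $Y$-components are not zero but are controlled derivatives. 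The point is that $\psi$, viewed as a map into $N^{n+1}_r(1/c)$ (for $c\neq 0$), has tangent plane spanned (projectively) by $\{Y_u, Y_v\}$ up to the $Y$ and $\psi$ directions, hence $\psi$ is a timelike surface sharing the asymptotic coordinate $(u,v)$ of $y$, wherever $\langle\psi,\kappa_1\rangle=\pm\langle\psi,\kappa_2\rangle\neq 0$ (this is where the open dense set $M_0$ enters). To produce the dual lift $Y^\psi$ one sets $Y^\psi = \lambda Y$ and chooses the conformal factor $\lambda$ so that $\langle dY^\psi, dY^\psi\rangle = \tfrac12(du\,dv+dv\,du)$; equivalently $\lambda^2 = 1/(2\langle Y^\psi_u,Y^\psi_v\rangle)$ after noting that the induced metric of $\psi$ in coordinates $(u,v)$ is a multiple of $du\,dv$ by the isothermic condition $\kappa_1=\pm\kappa_2$. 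Then a short computation shows $Y^\psi_u\parallel \psi_v$ and $Y^\psi_v\parallel\psi_u$, i.e. $Y^\psi$ is the Christoffel/dual surface; the mean curvature direction of $\psi$ is then seen to be $Y^\psi$ itself, and $\langle\psi$-frame appropriately normalized$, Y^\psi\rangle$ is constant (in fact the section is minimal: $\langle\cdot,H^\psi\rangle\equiv 0$), which gives the generalized $H$-surface statement. The $(\pm)$-isothermic property of $\psi$ follows because its Hopf differentials, computed from $Y^\psi$, inherit the symmetry $\kappa_1^\psi=\pm\kappa_2^\psi$ from $\kappa_1=\pm\kappa_2$ — this is the cleanest to see by the transformation law \eqref{eq-trans2} relating conformal and isometric Hopf differentials applied to the surface $\psi$ in $N^{n+1}_r(1/c)$.

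For part (ii), the metric formula, I would compute $\langle\kappa_1^\psi,\kappa_2^\psi\rangle$ for the lift $Y^\psi$ directly. Write $Y^\psi=\lambda Y$; from part (i) the scalar $\lambda$ is determined (up to sign) by $\langle Y^\psi_u, Y^\psi_v\rangle = \tfrac12$, and one finds $\lambda$ is essentially $1/\langle\psi,\kappa_1\rangle$ up to a factor involving $e^\omega$ — more precisely one should track through \eqref{eq-moving1} that $\psi_u = -2\langle\psi,\kappa_1\rangle Y_v + (\partial_v\langle\psi,\kappa_1\rangle)\,2Y$, so the dual condition forces $Y^\psi_v = \mu\,\psi_u$ with $\mu$ read off from the normalization. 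Then $\kappa_1^\psi = (Y^\psi)_{uu} + \tfrac{s_1^\psi}{2}Y^\psi$ is the $V^{\perp,\psi}$-component of $(Y^\psi)_{uu}$; carrying out the differentiation of $\lambda Y$ and projecting, the extra term beyond $\langle\kappa_1,\kappa_2\rangle$ organizes into $\big(\langle\psi,D_u\kappa_1\rangle/\langle\psi,\kappa_1\rangle\big)_v$. I would double-check this against the spacelike case in \cite{Ma-W2}, where the analogous computation appears, and adapt signs to the Lorentzian asymptotic setting (lightlike $u,v$ rather than conjugate complex coordinates).

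For the final Willmore-invariance claim, assume $M$ is closed and $\psi$ globally immersed, so $M_0=M$ and $g^\psi$ is a genuine (everywhere-defined) metric on $M$. The Willmore functional is $W(\psi)=\tfrac12\int_M\langle\kappa_1^\psi,\kappa_2^\psi\rangle\,du\,dv = \tfrac12\int_M g^\psi$. By the formula in (ii), $g^\psi - g = \big(\langle\psi,D_u\kappa_1\rangle/\langle\psi,\kappa_1\rangle\big)_v\,du\,dv$. The key observation is that this difference is an exact 2-form: $\big(f_v\big)\,du\,dv = d(f\,du)$ where $f = \langle\psi,D_u\kappa_1\rangle/\langle\psi,\kappa_1\rangle$ is a globally well-defined smooth function on the closed surface $M$ (well-defined because $\langle\psi,\kappa_1\rangle$ is nowhere zero by the global immersion hypothesis, and because although $\kappa_1$, $du$ individually depend on the coordinate choice, $\kappa_1\,du^2/\sqrt{du\,dv}$ and the combination $f\,du$ transform consistently — this needs the scaling rules recorded after \eqref{eq-inte3}). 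Hence by Stokes' theorem $\int_M (g^\psi-g)=0$, so $W(\psi)=W(y)$. The main obstacle I anticipate is precisely this last point: verifying that $f\,du$ is a well-defined global $1$-form (coordinate-independent) on $M$, since $u$ is only a local asymptotic coordinate and $\psi$, $\kappa_1$ carry coordinate-dependent normalizations; one must check that under an admissible change $(\tilde u,\tilde v)=(\tilde u(u),\tilde v(v))$ the quantity $f\,du$ is invariant, using \eqref{eq-inte1}–\eqref{eq-inte3} and the transformation formulas for $\kappa_i$ and $s_i$. A secondary technical point is being careful that the dense-open-set issue in part (i) does not obstruct the global argument — but the hypothesis that $\psi$ is globally immersed forces $\langle\psi,\kappa_1\rangle\neq 0$ everywhere, removing it.
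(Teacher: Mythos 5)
Your overall strategy coincides with the paper's: work from the structure equations \eqref{eq-moving1}, use the explicit formulas for $\psi_u$ and $\psi_v$, produce the dual lift as a rescaling $\lambda Y$, and prove the Willmore invariance by writing $g^\psi-g$ as an exact form and applying Stokes. Your closing observation --- that one must check $\frac{\langle\psi,D_u\kappa_1\rangle}{\langle\psi,\kappa_1\rangle}\,du$ is a globally well-defined $1$-form under admissible changes of asymptotic coordinate --- is a legitimate point which the paper itself passes over in silence.

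However, two steps as you describe them would not go through. First, the heart of part (i) is the verification that the Hopf differentials of $\psi$, viewed as a surface in $N^{n+1}_r(\frac{1}{c})$, satisfy $\Omega_1^\psi=\pm\Omega_2^\psi$; you assert that this symmetry is ``inherited'' and propose to see it via the transformation law \eqref{eq-trans2}, but that law converts the isometric invariants of a given surface into its conformal ones --- it does not produce either set of invariants for $\psi$, so the suggestion is circular. What is actually required (and what the paper does) is to differentiate $\psi_u=2\langle\psi,D_v\kappa_1\rangle Y-2\langle\psi,\kappa_1\rangle Y_v$ once more, obtaining $\psi_{uu}=\frac{2\langle D_u\kappa_1,\psi\rangle}{\langle\psi,\kappa_1\rangle}\psi_u+\Omega_1^\psi$ with $\Omega_1^\psi=-\langle\psi,\kappa_1\rangle N+2\langle\psi,D_v\kappa_1\rangle Y_u+2\langle\psi,D_u\kappa_1\rangle Y_v+(\cdots)Y$, and likewise for $\psi_{vv}$; the identity $\Omega_1^\psi=\varepsilon\Omega_2^\psi$ must then be checked term by term (the $Y$-coefficients agree only after invoking the Ricci equation \eqref{eq-inte3} to exchange $D_uD_v$ and $D_vD_u$). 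Second, your prescription for the dual lift is inconsistent: if $Y^\psi=\lambda Y$ and you impose $\langle dY^\psi,dY^\psi\rangle=\frac{1}{2}(du\otimes dv+dv\otimes du)$, then $\lambda^2\equiv 1$ and $Y^\psi$ is just the canonical lift, for which $Y^\psi_u\parallel\psi_v$ fails unless $\langle\psi,D_u\kappa_2\rangle=0$. The correct scaling is forced by the parallelism conditions themselves, namely $\lambda_u/\lambda=-\partial_u\log\langle\psi,\kappa_2\rangle$ and its $v$-analogue, giving $Y^\psi=\frac{1}{\langle\psi,\kappa_1\rangle}Y$, which has $\langle Y^\psi_u,Y^\psi_v\rangle=\frac{1}{2\langle\psi,\kappa_1\rangle^2}\neq\frac{1}{2}$ in general. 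With these two repairs, your computation of $g^\psi$ in (ii) and the Stokes argument proceed exactly as in the paper.
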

\begin{proof}
(i) Here we retain the notions in Section 2. Let $\psi\in\Gamma(V^{\perp})$ be a parallel section with $\langle\psi,\psi\rangle=c$. So we obtain that \begin{align*}
\psi_u=2\langle\psi,D_v \kappa_1\rangle Y-2\langle \psi,\kappa_1\rangle Y_v,\\
\psi_v=2\langle\psi,D_u \kappa_2\rangle Y-2\langle \psi,\kappa_2\rangle Y_u.
\end{align*}
Since $\kappa_2=\varepsilon \kappa_1$, $\varepsilon=\pm1$, we derive that
\begin{equation*}
\langle\psi_u,\psi_v\rangle=2\e\langle\psi,\kappa_1\rangle^2.
\end{equation*}
If $\langle\psi,\kappa_1\rangle\neq 0$ on an open dense subset $M_0$ of $M$, then
$\psi$ is an immersion. We calculate
\begin{align}
\psi_{uu}&=2\<\psi,D_u D_v \kappa_1\>Y-2\<\psi,D_u\kappa_1\>Y_v+2\<\psi,D_v\kappa_1\>Y_u-2\<\psi,\kappa_1\>Y_{vu}\nonumber\\
&=\frac{2\<D_u\kappa_1,\psi\>}{\<\psi,\kappa_1\>}\psi_u+\O_1^\psi
\end{align}
with
\begin{align}
\O_1^\psi&=-\<\psi,\kappa_1\>N+2\<\psi,D_v\kappa_1\>Y_u+2\<\psi,D_u\kappa_1\>Y_v\nonumber\\
&+2(\<\psi,D_uD_v\kappa_1\>+\<\psi,\kappa_1\>\<\kappa_1,\kappa_2\>-\frac{2\<\psi,D_u\kappa_1\>\<\psi,D_v\kappa_1\>}{\<\psi,\kappa_1\>})Y.
\end{align}
Similarly we get
\begin{align}
\psi_{vv}&=2\<\psi,D_v D_u \kappa_2\>Y-2\<\psi,D_v\kappa_2\>Y_u+2\<\psi,D_u\kappa_2\>Y_v-2\<\psi,\kappa_2\>Y_{uv}\nonumber\\
&=\frac{2\<D_v\kappa_2,\psi\>}{\<\psi,\kappa_2\>}\psi_v+\O_2^\psi
\end{align}
with
\begin{align}
\O_2^\psi&=-\<\psi,\kappa_2\>N+2\<\psi,D_u\kappa_2\>Y_v+2\<\psi,D_v\kappa_2\>Y_u\nonumber\\
&+2(\<\psi,D_vD_u\kappa_2\>+\<\psi,\kappa_2\>\<\kappa_1,\kappa_2\>-\frac{2\<\psi,D_u\kappa_2\>\<\psi,D_v\kappa_2\>}{\<\psi,\kappa_2\>})Y.
\end{align}
It follows from $\kappa_1=\e \kappa_2$ that
$$\O_1^\psi=\e\O_2^\psi,\hskip 5pt \e=\pm1.$$
Thus $\psi:M_0\to N^{n+1}_r(\frac{1}{c})$
is also a timelike $(\pm)-$ isothermic surface sharing the same
adapted coordinate with $y$.

For $\psi$ being a generalized $H$-surface, we first get
\begin{equation}
\psi_{uv}=-2\<\psi,\kappa_1\>\kappa_2+\<\psi,2D_vD_v\kappa_1+s_2\kappa_1\>Y.
\end{equation}
Let
\begin{equation*}
Y^\psi=\frac{1}{\<\psi,\kappa_1\>}Y.
\end{equation*}
We compute that
\begin{equation*}
Y^\psi_u=-\frac{1}{2\e\<\psi,\kappa_1\>^2}\psi_v,\hskip 5pt Y^\psi_v=-\frac{1}{2\<\psi,\kappa_1\>^2}\psi_u,
\end{equation*}
and
\begin{equation*}
\<Y^\psi,Y^\psi\>=\<Y^\psi,\psi\>=\<Y^\psi,\psi_u\>=\<Y^\psi,\psi_v\>=\<Y^\psi,\psi_{uv}\>=0,\Rightarrow
\<Y^\psi,H^\psi\>=0.
\end{equation*}
Here $H^\psi$ is the mean curvature vector of $\psi$. This finishes the proof of (i).

(ii). To the compute $g^\psi$, one derive directly by (15), (18) and (20) that
\begin{align*}
\<\kappa_1^\psi,\kappa_2^\psi\>&=\frac{1}{4\e\<\psi,\kappa_1\>^2}\<\O_1^\psi,\O_2^\psi\>\\
&=\<\kappa_1,\kappa_2\>+\frac{\<\psi,D_vD_u\kappa_1\>\<\psi,\kappa_1\>-\<\psi,D_u\kappa_1\>\<\psi,D_v\kappa_1\>}{\<\psi,\kappa_1\>^2}\\
&=\<\kappa_1,\kappa_2\>+\left(\frac{\<\psi,D_u\kappa_1\>}{\<\psi,\kappa_1\>}\right)_v
\end{align*}
For the Willmore functional, it is just a consequence of
\begin{equation*}
\left(\frac{\<\psi,D_u\kappa_1\>}{\<\psi,\kappa_1\>}\right)_vdu\wedge dv=-d\left(\frac{\<\psi,D_u\kappa_1\>}{\<\psi,\kappa_1\>}du\right)
\end{equation*}
by using the Stokes formula.
\end{proof}
\section{Timelike isothermic surfaces as an integrable system}

There are several equivalent methods to treat isothermic surfaces as integrable systems, see \cite{BDPT}, \cite{Bur}, \cite{BHPP}, \cite{CGS}. In each method, Darboux transform plays an essential role. So in this section, we first give a geometric definition of Darboux transforms for timelike isothermic surfaces. For a detailed discussion of Darboux transforms, we refer to \cite{Gu-H}. Then we introduce curved flat for timelike isothermic surfaces. In the end of this section we discuss briefly the relation between our treatment and the $O(n-r+1,r+1)/O(n-r,r)\times O(1,1)-$system II (in the sense of Terng, etc. \cite{BDPT}) methods which has been discussed in details by Dussan and Magid in \cite{Dussan-M2005}.

\subsection{Darboux transforms of timelike isothermic surfaces}

We define the Darboux transforms and give the basic properties of Darboux transforms as below (compare \cite{Dussan-M2005}, \cite{Ma-W2}, \cite{ZCC})
\begin{definition}
Let $y:M\to Q^n_r$ denote a timelike $(\pm)-$isothermic surface with canonical lift $Y$
with respect to the adapted coordinate $(u,v)$. A timelike immersion $\hat y:M\to Q^n_r$
is called a Darboux transform of $y$ if its local lift $\hat Y$ satisfies
\begin{equation}\label{eq-dar1}
\<Y,\hat Y\>\neq 0,\hskip 5pt \hat Y_u\in\mathrm{Span}\{\hat Y,Y,Y_v\}\hskip 3pt and \hskip 3pt\hat Y_v\in\mathrm{Span}\{\hat Y,Y,Y_u\}
\end{equation}
where $\hat Y$ is not necessarily the canonical lift,
and this definition is independent of the choice of local lift.
\end{definition}

\begin{remark}Geometrically, two isothermic surfaces $y$ and $\hat{y}$ in $Q^n_r$ form a
Darboux pair if they envelope the same sphere congruence at the
corresponding point and this transform preserves their
conformal curvature lines (see for example \cite{Gu-H}, \cite{BHPP}, \cite{BDPT}, \cite{Jer}, \cite{Ma-W2}, \cite{Dussan-M2005}, \cite{ZCC}, \cite{Wangpeng}).
\end{remark}
\begin{proposition}\label{Darboux}
Let $y:M\to Q^n_r$ be a timelike $(\pm)-$isothermic surface.
Assume that a timelike
immersion $\hat{y}:M\to Q^n_r$ is the Darboux transformation of $y$ with a lift $\hat{Y}$.
We have the following conclusions:
\begin{description}
    \item (i) $y$ and $\hat y$ share  the same adapted coordinate $(u,v)$ and they envelope the same 4-dimensional $(2,2)- $type subspace given by $\mathrm{Span}\{Y,\hat Y,dY\}=\mathrm{Span}\{Y,\hat
Y,d\hat Y\}$ (also called ``Lorentzian 2-sphere"
    congruence).
    \item (ii)   Set $\<Y,\hat Y>=-1$. We have
    \begin{equation}\label{eq-dar2}
\left\{\begin {array}{lllll}
\hat Y_{u}=b\hat Y+\theta_1 (Y_v+aY)\\
\hat Y_{v}=a\hat Y+\theta_2 (Y_u+bY)
\end {array}\right.
\end{equation}
     where $\theta_1=\pm\theta_2$ is a non-zero constant.
    \item (iii) $\hat Y$ is also a $(\pm)-$isothermic immersion sharing
    the same adapted coordinate $(u,v)$. As a consequence, the curvature lines of $y$ and $\hat y$ correspond to each other.
\end{description}
\end{proposition}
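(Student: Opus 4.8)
The plan is to turn the incidence relations \eqref{eq-dar1} into an explicit first-order system for the lift $\hat Y$, extract the constancy and sign constraints on its coefficients from the integrability condition $\hat Y_{uv}=\hat Y_{vu}$, and then read off (i)--(iii) by elementary inner-product and projection computations against the structure equations \eqref{eq-moving1} of $y$.

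First I would normalise: since $\langle Y,\hat Y\rangle\neq0$ and the conditions \eqref{eq-dar1} do not depend on the chosen lift, rescale $\hat Y$ so that $\langle Y,\hat Y\rangle=-1$. Writing $\hat Y_u=b\hat Y+c_1Y+\theta_1Y_v$ and $\hat Y_v=a\hat Y+c_2Y+\theta_2Y_u$ as \eqref{eq-dar1} permits, differentiating $\langle Y,\hat Y\rangle=-1$ gives $\langle Y_u,\hat Y\rangle=b$ and $\langle Y_v,\hat Y\rangle=a$ (using $\langle Y,Y\rangle=\langle Y,Y_u\rangle=\langle Y,Y_v\rangle=0$), while differentiating $\langle\hat Y,\hat Y\rangle=0$ gives $c_1=\theta_1a$ and $c_2=\theta_2b$; this is already the form \eqref{eq-dar2}, apart from the claims on $\theta_1,\theta_2$. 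Imposing $\hat Y_{uv}=\hat Y_{vu}$ and substituting $Y_{uu}=-\tfrac{s_1}{2}Y+\kappa_1$, $Y_{vv}=-\tfrac{s_2}{2}Y+\kappa_2$, $Y_{uv}=-\langle\kappa_1,\kappa_2\rangle Y+\tfrac12N$ from \eqref{eq-moving1}, I would compare the coefficients of the linearly independent vectors $\hat Y,Y,Y_u,Y_v,N$ and of the $V^{\perp}$-valued terms. The $V^{\perp}$-part forces $\theta_1\kappa_2=\theta_2\kappa_1$, so, since $y$ is $(\pm)$-isothermic ($\kappa_1=\varepsilon\kappa_2$, $\varepsilon=\pm1$, with $\kappa_1,\kappa_2\neq0$ off the umbilic locus), $\theta_1=\varepsilon\theta_2$; the $Y_u$- and $Y_v$-coefficients give $(\theta_2)_u=0$ and $(\theta_1)_v=0$, hence $\theta_1$ and $\theta_2$ are constants with $\theta_1=\varepsilon\theta_2=\pm\theta_2$, nonzero because $\theta_1=\theta_2=0$ would force $\hat Y_u,\hat Y_v\in\mathrm{Span}\{\hat Y\}$ and so $\hat y$ constant. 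The $\hat Y$-coefficient produces the auxiliary identity $a_u=b_v$, needed in (iii). This is (ii).

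For (i), \eqref{eq-dar2} together with $\langle Y,Y\rangle=\langle Y,Y_u\rangle=\langle Y,Y_v\rangle=0$, $\langle Y_u,Y_u\rangle=\langle Y_v,Y_v\rangle=0$ and $\langle Y_u,Y_v\rangle=\tfrac12$ gives at once $\langle\hat Y_u,\hat Y_u\rangle=\langle\hat Y_v,\hat Y_v\rangle=0$ and $\langle\hat Y_u,\hat Y_v\rangle=\tfrac{\theta_1\theta_2}{2}\neq0$, so $(u,v)$ is an asymptotic coordinate for $\hat y$ and, $\theta_1,\theta_2$ being constant, a constant rescaling of $\hat Y$ is the canonical lift of $\hat y$ adapted to $(u,v)$. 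From \eqref{eq-dar2}, $\hat Y_u,\hat Y_v\in S:=\mathrm{Span}\{Y,\hat Y,Y_u,Y_v\}$, and solving \eqref{eq-dar2} for $Y_u+bY$ and $Y_v+aY$ gives conversely $Y_u,Y_v\in\mathrm{Span}\{Y,\hat Y,\hat Y_u,\hat Y_v\}$; hence $\mathrm{Span}\{Y,\hat Y,dY\}=\mathrm{Span}\{Y,\hat Y,d\hat Y\}=S$. This $S$ is $4$-dimensional, because $\langle\hat Y,Y\rangle=-1$ forces $\hat Y\notin\mathrm{Span}\{Y,Y_u,Y_v\}$, and of type $(2,2)$, because $\{Y,\hat Y\}$ and $\{Y_u+bY,\,Y_v+aY\}$ span two mutually orthogonal hyperbolic planes (indeed $\langle Y_u+bY,\hat Y\rangle=\langle Y_v+aY,\hat Y\rangle=0$); so $[S\cap C^{n+1}_r]$ is the common Lorentzian $2$-sphere enveloped by $y$ and $\hat y$.

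For (iii), differentiating \eqref{eq-dar2} once more and substituting \eqref{eq-moving1} yields expansions $\hat Y_{uu}=(b_u+b^2)\hat Y+\theta_1(aY_u+bY_v)+q_1Y+\tfrac{\theta_1}{2}N$ and $\hat Y_{vv}=(a_v+a^2)\hat Y+\theta_2(aY_u+bY_v)+q_2Y+\tfrac{\theta_2}{2}N$ with explicit functions $q_1,q_2$. To read off the Hopf differentials $\hat\kappa_1,\hat\kappa_2$ of $\hat y$ I would project these onto the conformal normal bundle $\hat V^{\perp}$ of $\hat y$. Since $\hat Y\in\hat V$, $N\in V$, and (by \eqref{eq-dar2}) $Y_u+bY,\,Y_v+aY\in\hat V$, the $\hat V^{\perp}$-components of $Y_u$ and of $Y_v$ are $-b$ and $-a$ times that of $Y$; using $a_u=b_v$, the $\hat V^{\perp}$-parts of $\hat Y_{uu}$ and of $\hat Y_{vv}$ then collapse to $\theta_1$ and $\theta_2$ times the \emph{same} normal vector, namely the $\hat V^{\perp}$-projection of $(a_u-ab-\langle\kappa_1,\kappa_2\rangle)Y+\tfrac12N$. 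Hence $\hat\kappa_1$ and $\hat\kappa_2$ are proportional with ratio $\theta_1/\theta_2=\varepsilon$, i.e.\ $\hat\kappa_1=\pm\hat\kappa_2$ with the same sign as for $y$; this gives (iii), and the correspondence of conformal curvature lines follows. I expect the real work to be the coefficient bookkeeping in the $\hat Y_{uv}=\hat Y_{vu}$ comparison and, above all, the $\hat V^{\perp}$-projection in (iii) --- checking that all the $Y$-, $Y_u$-, $Y_v$- and $N$-contributions genuinely combine into a single common normal direction --- together with the routine care needed at the umbilic locus and with the sign $\varepsilon$, where for $\theta_1=-\theta_2$ one must re-examine the adapted-coordinate claim after a trivial change of parameters.
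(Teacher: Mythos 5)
Your proposal is correct and follows essentially the same route as the paper: normalize $\langle Y,\hat Y\rangle=-1$ to obtain the explicit form \eqref{eq-dar2}, compare $\hat Y_{uv}$ with $\hat Y_{vu}$ against the structure equations \eqref{eq-moving1} to extract $\theta_1=\pm\theta_2$ constant (nonzero by immersivity) together with $a_u=b_v$, and then compute $\hat Y_{uu},\hat Y_{vv}$ modulo $\{\hat Y,\hat Y_u,\hat Y_v\}$ to see that their normal parts are proportional with ratio $\theta_1/\theta_2$. The only difference is that you spell out part (i) (the $(2,2)$-type of the common $4$-plane and $\langle\hat Y_u,\hat Y_v\rangle=\tfrac{\theta_1\theta_2}{2}$) in detail where the paper declares it obvious, and you are slightly more careful about the umbilic locus when dividing by $\kappa_2$.
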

\begin{proof}
The conclusion (i) is obvious from the assumption \eqref{eq-dar1}. (Recall that a
round 2-sphere in $R^n_r$
is identified to a 4-dimensional Lorentzian subspace in $\R^{n+2}_{r+1}$.)
The normalization $\<Y,\hat Y\>=-1$ ensures $\<Y_u,\hat Y\>=-\<Y,\hat Y_u\>=b$ and
$\<Y_v,\hat Y\>=-\<Y,\hat Y_v\>=a$. Then
$\hat Y_u\in\mathrm{Span}\{\hat Y,Y,Y_v\}$ and $\hat Y_v\in\mathrm{Span}\{\hat Y,Y,Y_u\}$ is explicitly expressed by \eqref{eq-dar2}
Respectively differentiate these two formulas in \eqref{eq-dar2}, we obtain
\begin{equation}\label{eq-yuv}
\hat Y_{uv}=(b_v+ab)\hat Y+(b\t_2)Y_u+(\t_{1v}+a\t_1)Y_v+\t_1\kappa_2+(\cdot\cdot\cdot)Y
\end{equation}
and
\begin{equation}\label{eq-yvu}
\hat Y_{vu}=(a_u+ab)\hat Y+(a\t_1)Y_v+(\t_{2u}+b\t_2)Y_u+\t_2\kappa_1+(\cdot\cdot\cdot).
\end{equation}
Since $y$ is a timelike $(\pm)-$isothermic surface, $\kappa_1=\pm \kappa_2$. Comparing \eqref{eq-yuv} and
\eqref{eq-yvu}, we have the following
\begin{equation}\label{eq-darboux}
\left\{\begin {array}{lllll}
\t_1=\pm\t_2,\\
\t_{1v}=\t_{2u}=0,\\
a_u=b_v.
\end {array}\right.
\end{equation}
Hence $\t_1$ and $\t_2$ must be constant. They are both non-zero since $[\hat Y]$ is
an immersion. This verifies (ii).
\par
To show conclusion (iii) we need only to
show that $\hat Y^\bot_{uu}=\pm\hat Y^\bot_{vv}$. Differentiate \eqref{eq-dar2}, we obtain
\begin{align*}
\hat Y_{uu}&=\t_1(Y_{vu}+a_uY+aY_u)+b_u\hat Y+b\hat Y_u\\
&=\t_1Y_{vu}+(\t_1a_u)Y-(\t_1ab)Y\hskip 5pt \mod\ \{\hat Y,\hat Y_u,\hat Y_v\}
\end{align*}
and
\begin{align*}
\hat Y_{vv}&=\t_2(Y_{uv}+b_vY+bY_v)+a_v\hat Y+a\hat Y_v\\
&=\t_2Y_{uv}+(\t_2b_v)Y-(\t_2ab)Y\hskip 5pt \mod\ \{\hat Y,\hat Y_u,\hat Y_v\}
\end{align*}
It follows from (26) that $\hat Y^\bot_{uu}=\pm\hat Y^\bot_{vv}$. Thus
$\hat Y$ is a $(\pm)-$isothermic immersion sharing with the same adapted coordinate $(u,v)$,
and the u-curves and
v-curves are exactly the curvature lines on both of $y$ and $\hat y$. This completes the
proof.\\
\end{proof}

Following \cite{Bur}, we call $y$ and $\hat{y}=[\hat{y}]$ a Darboux pair of isothermic surfaces.

\subsection{Maurer-Cartan forms of a Darboux pair of timelike isothermic surfaces. }

 Let $y$ and $\hat{y}$ be a Darboux pair of isothermic surfaces. For the notion of curved flats, we need to deform the structure equations by using frames related with the Darboux pairs.

 To begin with, first we denote by $O(n-r+1,r+1)$ the group defined as below (compare \cite{BHPP})
\begin{equation}\left\{\begin {array}{lllll}
O(n-r+1,r+1):=\{A\in Mat(n+2,\mathbb{R})| A^t\hat{I}A=\hat I\},\\
\mathfrak{o}(n-r+1,r+1):=\{A\in  Mat(n+2,\mathbb{R}) | A^t\hat I+\hat IA=0\},
\\
\end {array}\right.\end{equation}
with
$$\hat I=\left(
           \begin{array}{ccc}
             J_2 &  &  \\
              & J_2 &  \\
              &  & I_{n-r-1,r-1} \\
           \end{array}
         \right).$$
         Here $$J_2=\left(
                                        \begin{array}{cc}
                                          0 & 1 \\
                                          1 & 0 \\
                                        \end{array}
                                      \right)\ \ \hbox{ and } \ \
I_{n-r-1,r-1}=diag\{1,\cdots,1,-1,\cdots,-1\}.$$

Now let us turn to the Darboux pairs. We can write out $\hat Y$ explicitly by using the frame of $y$:
\begin{equation}
\hat Y=N+2aY_u+2bY_v+(2ab+\frac{1}{2}\<\xi,\xi\>)Y+\xi.
\end{equation}
Set
\begin{equation}
P_1=Y_v+aY \ \hbox{ and }\  P_2=Y_u+bY.
\end{equation}
The structure equations \eqref{eq-moving1} of Y can be rewritten with respect to the frame
$\{Y,\hat Y,P_1,P_2,\psi\}$ as below
\begin{equation}\label{eq-moving2}
\left\{\begin {array}{lllll}
\hat Y_u=b\hat Y+\t_1P_1,\\
\hat Y_v=a\hat Y+\t_2P_2, \\
P_{1u}=-bP_1+\frac{1}{2}\hat Y-\frac{1}{2}(\xi+\<\xi,\xi\>Y),\\
P_{1v}=aP_1+\frac{\t_2}{2}Y+(\kappa_2+\<\kappa_2,\xi\>Y),\\
P_{2u}=bP_2+\frac{\t_1}{2}Y+(\kappa_1+\<\kappa_1,\xi\>Y),\\
P_{2v}=-aP_2+\frac{1}{2}\hat Y-\frac{1}{2}\<\xi,\xi\>Y-\frac{1}{2}\xi,\\
\psi_u=-2\<\psi,\kappa_1\>P_1-\<\psi,D_u\xi-b\xi\>Y,\\
\psi_v=-2\<\psi,\kappa_2\>P_2-\<\psi,D_v\xi-a\xi\>Y,\\
\end {array}\right.
\end{equation}
Let $\tilde{\psi}_j=\psi_j+\langle\psi_j,\xi\rangle Y$, we have
$$\left\{\begin {array}{lllll}
\tilde{\psi}_{ju}=-2\<\tilde\psi_{j},\kappa_1\>P_1+\<\tilde\psi_{j},\xi\>P_2,\\
\tilde{\psi}_{jv}=-2\<\tilde\psi_{j},\kappa_2\>P_2+\<\tilde\psi_{j},\xi\>P_1.\\
\end {array}\right.$$
Set $F=\left(Y,-\hat{Y},P_1,2P_2,\tilde\psi_3,\cdots,\tilde{\psi}_{n}\right)$, and assume that
$$k_{1j}=\<\kappa_1,\psi_j\>,\ k_{2j}=\<\kappa_2,\psi_j\>,\ \epsilon_j=\<\psi_j,\psi_j\>, \ c_{ j}=\<\xi,\psi_j\>,\  j=3,\cdots,n. $$
Note that $\epsilon_j=1,\ 3\leq j\leq n-r+1,\ \epsilon_j=-1,\ n-r+2\leq j\leq n. $ Then we obtain that
\begin{equation}
\alpha=F^{-1}F_u=\left(
                           \begin{array}{cccccccc}
                            -b & 0   & 0  &    \theta_1&  0 &   \cdots & 0 \\
                            0  &b & -\frac{1}{2} &   0  &  0 &   \cdots & 0  \\
                            0 & -\theta_1  & -b  &   0 &  - k_{13}\epsilon_3 &   \cdots & - k_{1n}\epsilon_n  \\
                            \frac{1}{2}  &  0  & 0  &   b  &  \frac{1}{2}c_3\epsilon_3 &   \cdots & \frac{1}{2}c_n\epsilon_n\\
                            0  &0  & -\frac{1}{2}c_3 &  2 k_{13}  &  0 &   \cdots & 0 \\
                            \cdots &  \cdots   &  \cdots    &  \cdots  &  \cdots   &  \cdots  &  \cdots    \\
                            0  &0 & -\frac{1}{2}c_n  &   2k_{1n}  &  0 &   \cdots & 0  \\
                             \end{array}
                         \right),
\end{equation}
and
\begin{equation}
\beta=F^{-1}F_v=\left(
                           \begin{array}{cccccccc}
                            -a  & 0  & \frac{1}{2}\theta_2 &   0 &   0 &   \cdots & 0 \\
                            0 &   a &   0 & -1   &  0 &   \cdots & 0   \\
                            1  &  0  & a &  0 &  c_3\epsilon_3 &   \cdots & c_n\epsilon_n  \\
                            0 &-\frac{1}{2} \theta_2  & 0&   -a  &    - k_{23}\epsilon_3 &   \cdots & - k_{2n}\epsilon_n \\
                            0   &0 &   k_{23} & - c_3  &  0 &   \cdots & 0  \\
                            \cdots &  \cdots    &  \cdots    &  \cdots  &  \cdots   &  \cdots  &  \cdots   \\
                            0   &0&  k_{2n}  &  - c_n &  0 &   \cdots & 0  \\
                             \end{array}
                         \right).
\end{equation}
Note that now $F$ provides a frame in $O(n-r+1,r+1)$.

Assume moreover that
\begin{equation}F^{-1}dF= \alpha du+ \beta dv=(\alpha_0+\alpha_1)du+(\beta_0+\beta_1)dv.
\end{equation}
with
 \begin{equation} \label{eq-A1} \alpha_0=\left(
                           \begin{array}{cccccccc}
                            -b & 0   & 0  &  0&  0 &   \cdots & 0 \\
                            0  &b & 0 &   0  &  0 &   \cdots & 0  \\
                            0 &0  & -b  &   0 &  - k_{13}\epsilon_3 &   \cdots & - k_{1n}\epsilon_n  \\
                            0  &  0  & 0  &   b  &  \frac{1}{2}c_3\epsilon_3 &   \cdots & \frac{1}{2}c_n\epsilon_n\\
                            0  &0  & -\frac{1}{2}c_3 &  2 k_{13}  &  0 &   \cdots & 0 \\
                            \cdots &  \cdots   &  \cdots    &  \cdots  &  \cdots   &  \cdots  &  \cdots    \\
                            0  &0 & -\frac{1}{2}c_n  &   2k_{1n}  &  0 &   \cdots & 0  \\
                             \end{array}
                         \right),
\end{equation}
and \begin{equation} \label{eq-A2}  \alpha_1=\left(
                           \begin{array}{cccccccc}
                            0& 0   & 0  &    \theta_1&  0 &   \cdots & 0 \\
                            0  &0 & -\frac{1}{2} &   0  &  0 &   \cdots & 0  \\
                            0 & -\theta_1  & 0&   0 & 0 &   \cdots & 0 \\
                           \frac{ 1}{2}  &  0  & 0  & 0  & 0 &   \cdots &0\\
                            0  &0  & 0&  0 &  0 &   \cdots & 0 \\
                            \cdots &  \cdots   &  \cdots    &  \cdots  &  \cdots   &  \cdots  &  \cdots    \\
                            0  &0 &  0  &  0 &  0 &   \cdots & 0  \\
                             \end{array}
                         \right),
\end{equation}
and
\begin{equation}\label{eq-B1} \beta_0=\left(
                           \begin{array}{cccccccc}
                            -a  & 0  &0 &   0 &   0 &   \cdots & 0 \\
                            0 &   a &   0 & 0  &  0 &   \cdots & 0   \\
                            0  &  0  & a &  0 &  c_3\epsilon_3 &   \cdots & c_n\epsilon_n  \\
                            0 & 0 & 0&   -a  &    - k_{23}\epsilon_3 &   \cdots & - k_{2n}\epsilon_n \\
                            0   &0 &   k_{23} & - c_3  &  0 &   \cdots & 0  \\
                            \cdots &  \cdots    &  \cdots    &  \cdots  &  \cdots   &  \cdots  &  \cdots   \\
                            0   &0&  k_{2n}  &  - c_n &  0 &   \cdots & 0  \\
                             \end{array}
                         \right)
\end{equation}
and
\begin{equation}\label{eq-B2}  \beta_1=\left(
                           \begin{array}{cccccccc}
                            0 & 0  & \frac{1}{2}\theta_2 &   0 &   0 &   \cdots & 0 \\
                            0 &  0 &   0 &- 1   &  0 &   \cdots & 0   \\
                            1  &  0  &0&  0 & 0 &   \cdots & 0 \\
                            0 & -\frac{1}{2}\theta_2  & 0& 0  &    0 &   \cdots & 0\\
                            0   &0 &  0& 0  &  0 &   \cdots & 0  \\
                            \cdots &  \cdots    &  \cdots    &  \cdots  &  \cdots   &  \cdots  &  \cdots   \\
                            0   &0 &  0& 0  &  0 &   \cdots & 0  \\
                             \end{array}
                         \right).
\end{equation}
It is straightforward to verify that $$\alpha_1\beta  _1-\beta  _1\alpha_1=0.$$
On the other hand, the integrability of $F$ yields
$$\alpha _{v}-\beta  _{u}-\alpha \beta  +\beta  \alpha =0.$$
Altogether we derive that
\begin{equation}\label{eq-darboux2}\left\{\begin{split}
&\alpha _{0v}-\beta  _{0u}-\alpha _0\beta  _0+\beta  _0\alpha _0=0,\\
&\alpha _{1v}-\beta  _{1u}-\alpha _1\beta  _0+\beta  _0\alpha _1-\alpha _0\beta  _1+\beta  _1\alpha _0=0,\\
&\alpha _1\beta  _1-\beta  _1\alpha _1=0.\\
\end{split}\right.
\end{equation}

\subsection{Timelike isothermic surfaces as Curved flats}

One of the basic relations of isothermic surfaces with integrable system involves the so-called curved flats discovered
by Ferus-Pedit \cite{FP}. In \cite{BHPP}, they showed that an isothermic surface in $\mathbb{R}^3$ together with its Darboux transform defines a curved flat in $SO(1,4)/SO(3)\times SO(1,1)$. For the higher co-dimensional case, see Burstall's summary paper on isothermic surfaces \cite{Bur}.  Here we will show that timelike isothermic surface together with its Darboux transform can also be related with a curved flat.

First we recall the definition of curved flats. Let $G/K$ be a symmetric space with an involution $\sigma:G\rightarrow G$ of (semi-simple) Lie group $G$ and $K$ as the fixed subgroup of $\sigma$. Then we have the decomposition
$$\mathfrak{g}=\mathfrak{k}\oplus\mathfrak{p}$$
with $\mathfrak{k}$ the $+1$-eigenspace of $\sigma$, and $\mathfrak{p}$ the $-1$-eigenspace of $\sigma$. Note the famous conditions
$$[\mathfrak{k},\mathfrak{k}]\subset\mathfrak{k},\ [\mathfrak{p},\mathfrak{p}]\subset\mathfrak{k},\ [\mathfrak{k},\mathfrak{p}]\subset\mathfrak{p}.$$
Then we have
\begin{definition} \cite{BHPP}, \cite{FP} Let $\phi: M\rightarrow G/K$ be an immersion with a frame $F:M \rightarrow G$. Suppose the Maurer-Cartan form of $F$ is decomposed according $\sigma$ as
$$\Phi=F^{-1}dF=\alpha_{\mathfrak{k}}+\alpha_{\mathfrak{p}}.$$
Then $\phi$ is called a curved flat, if each $\alpha_{\mathfrak{p}}$ is contained in an Abelian subalgebra of $\mathfrak{g}$.
\end{definition}

 It is well-known that one can introduce a loop of Maurer-Cartan forms as follows (Lemma 3 in \cite{FP}, Proposition 3.1 \cite{Bur}):
\begin{proposition} Let $F : M\rightarrow G$ with $F^{-1}dF = \alpha_{\mathfrak{k}}+\alpha_{\mathfrak{p}}$. Then $F$ frames a
curved flat if and only if

\begin{equation}
\alpha_{\lambda}=\alpha_{\mathfrak{k}}+\lambda\alpha_{\mathfrak{p}}
\end{equation} satisfies
\begin{equation}
d\alpha_{\lambda}+\frac{1}{2}[\alpha_{\lambda} \wedge \alpha_{\lambda}]=0,\ \forall \lambda\in \mathbb{R}.
\end{equation}
\end{proposition}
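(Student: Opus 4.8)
The plan is to exploit the fact that the Maurer--Cartan form $\Phi=F^{-1}dF$ of \emph{any} Lie-group-valued map automatically satisfies the structure equation $d\Phi+\tfrac12[\Phi\wedge\Phi]=0$, and to read off its $\mathfrak{k}$- and $\mathfrak{p}$-components separately using the symmetric-space bracket relations $[\mathfrak{k},\mathfrak{k}]\subset\mathfrak{k}$, $[\mathfrak{p},\mathfrak{p}]\subset\mathfrak{k}$, $[\mathfrak{k},\mathfrak{p}]\subset\mathfrak{p}$. The whole argument rests on one reformulation: by the definition of a curved flat, $F$ frames a curved flat precisely when the image of $\alpha_{\mathfrak{p}}$ at each point spans an abelian subalgebra, which for the $\mathfrak{p}$-valued $1$-form $\alpha_{\mathfrak{p}}$ is equivalent to the single condition $[\alpha_{\mathfrak{p}}\wedge\alpha_{\mathfrak{p}}]=0$ (recall $[\alpha_{\mathfrak{p}}\wedge\alpha_{\mathfrak{p}}](X,Y)=2[\alpha_{\mathfrak{p}}(X),\alpha_{\mathfrak{p}}(Y)]$). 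So the goal is to show that the $\lambda$-family zero-curvature equation holds for all $\lambda$ if and only if $[\alpha_{\mathfrak{p}}\wedge\alpha_{\mathfrak{p}}]=0$.

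First I would substitute $\Phi=\alpha_{\mathfrak{k}}+\alpha_{\mathfrak{p}}$ into the Maurer--Cartan equation and split it according to the grading. The terms $d\alpha_{\mathfrak{k}}$, $\tfrac12[\alpha_{\mathfrak{k}}\wedge\alpha_{\mathfrak{k}}]$, and $\tfrac12[\alpha_{\mathfrak{p}}\wedge\alpha_{\mathfrak{p}}]$ land in $\mathfrak{k}$, while $d\alpha_{\mathfrak{p}}$ and $\tfrac12([\alpha_{\mathfrak{k}}\wedge\alpha_{\mathfrak{p}}]+[\alpha_{\mathfrak{p}}\wedge\alpha_{\mathfrak{k}}])$ land in $\mathfrak{p}$. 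This yields the two identities
\begin{equation*}
d\alpha_{\mathfrak{k}}+\tfrac12[\alpha_{\mathfrak{k}}\wedge\alpha_{\mathfrak{k}}]=-\tfrac12[\alpha_{\mathfrak{p}}\wedge\alpha_{\mathfrak{p}}],\qquad d\alpha_{\mathfrak{p}}+\tfrac12\bigl([\alpha_{\mathfrak{k}}\wedge\alpha_{\mathfrak{p}}]+[\alpha_{\mathfrak{p}}\wedge\alpha_{\mathfrak{k}}]\bigr)=0,
\end{equation*}
both of which hold identically because $F$ maps into $G$.

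Next I would plug $\alpha_\lambda=\alpha_{\mathfrak{k}}+\lambda\alpha_{\mathfrak{p}}$ into the left-hand side of the zero-curvature equation and collect powers of $\lambda$. The $\lambda^0$-term is $d\alpha_{\mathfrak{k}}+\tfrac12[\alpha_{\mathfrak{k}}\wedge\alpha_{\mathfrak{k}}]$, the $\lambda^1$-term is $d\alpha_{\mathfrak{p}}+\tfrac12([\alpha_{\mathfrak{k}}\wedge\alpha_{\mathfrak{p}}]+[\alpha_{\mathfrak{p}}\wedge\alpha_{\mathfrak{k}}])$, and the $\lambda^2$-term is $\tfrac12[\alpha_{\mathfrak{p}}\wedge\alpha_{\mathfrak{p}}]$. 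By the two identities above, the $\lambda^1$-term vanishes identically and the $\lambda^0$-term equals $-\tfrac12[\alpha_{\mathfrak{p}}\wedge\alpha_{\mathfrak{p}}]$, so the whole expression collapses to
\begin{equation*}
d\alpha_\lambda+\tfrac12[\alpha_\lambda\wedge\alpha_\lambda]=\frac{\lambda^2-1}{2}\,[\alpha_{\mathfrak{p}}\wedge\alpha_{\mathfrak{p}}].
\end{equation*}
Since the scalar factor $\lambda^2-1$ is not identically zero (it equals $-1$ at $\lambda=0$), this $2$-form vanishes for all $\lambda\in\mathbb{R}$ if and only if $[\alpha_{\mathfrak{p}}\wedge\alpha_{\mathfrak{p}}]=0$, which by the reformulation above is exactly the curved-flat condition. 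This settles both implications simultaneously.

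There is no serious obstacle here; the argument is essentially bookkeeping of the $\sigma$-grading. The only points requiring care are the verification that $F$ genuinely frames a curved flat if and only if $[\alpha_{\mathfrak{p}}\wedge\alpha_{\mathfrak{p}}]=0$ -- matching the pointwise abelian condition in the definition with vanishing of the wedge-bracket -- and tracking the sign and symmetry conventions for the wedge bracket of $\mathfrak{g}$-valued $1$-forms, so that the cross terms $[\alpha_{\mathfrak{k}}\wedge\alpha_{\mathfrak{p}}]$ and $[\alpha_{\mathfrak{p}}\wedge\alpha_{\mathfrak{k}}]$ combine correctly into the $\lambda^1$-coefficient.
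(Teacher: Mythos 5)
Your proof is correct. Note that there is no in-paper argument to compare against: the paper states this proposition as known, deferring its proof to Lemma 3 of \cite{FP} and Proposition 3.1 of \cite{Bur}. Your argument---splitting the Maurer--Cartan equation of $F$ along the $\sigma$-grading to get the two identities, then expanding $d\alpha_\lambda+\tfrac12[\alpha_\lambda\wedge\alpha_\lambda]$ in powers of $\lambda$ so that it collapses to $\tfrac{\lambda^2-1}{2}[\alpha_{\mathfrak{p}}\wedge\alpha_{\mathfrak{p}}]$---is exactly the standard proof given in those references, so you have in effect supplied the details the paper omits by citation. It is also consistent with what the paper does elsewhere: in the proof of Theorem \ref{th-curveflat}, the flatness of $\alpha_\lambda$ for all $\lambda$ is unpacked into the three equations \eqref{eq-darboux2}, which are precisely the $\lambda^0$, $\lambda^1$, $\lambda^2$ coefficients of your identity written in the coordinate frame, with $\alpha_1\beta_1-\beta_1\alpha_1=0$ playing the role of $[\alpha_{\mathfrak{p}}\wedge\alpha_{\mathfrak{p}}]=0$. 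The one point that genuinely needs to be said, and which you do address, is the equivalence of the definition (``image of $\alpha_{\mathfrak{p}}$ lies in an abelian subalgebra pointwise'') with $[\alpha_{\mathfrak{p}}\wedge\alpha_{\mathfrak{p}}]=0$: if all brackets among image vectors vanish, the linear span of the image is itself an abelian subalgebra, and conversely; together with the wedge-bracket convention $[\alpha_{\mathfrak{p}}\wedge\alpha_{\mathfrak{p}}](X,Y)=2[\alpha_{\mathfrak{p}}(X),\alpha_{\mathfrak{p}}(Y)]$ this closes the equivalence cleanly.
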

For the existence of $F_{\lambda}$ framing a curved flat $f$, we refer to Theorem 3.2 \cite{Bur}.

For a Darboux pair of timelike isothermic surfaces, it is obvious that the related $G/K$ is described as
 $$G=O(n-r+1,r+1), \  $$
and \begin{equation}K=O(n-r,r)\times O(1,1)=\left\{A\in G\left|\right. \left(
                                                          \begin{array}{ccc}
                                                            -I_2 & 0  \\
                                                            0   & I_n \\
                                                          \end{array}
                                                        \right)\cdot A\cdot\left(
                                                          \begin{array}{ccc}
                                                            -I_2 & 0  \\
                                                            0   & I_n \\
                                                          \end{array}
                                                        \right)=A
  \right\}.\end{equation}
  So
 \begin{equation}\mathfrak{k}=Lie K=\left\{
                                     \left(
                                                          \begin{array}{ccc}
                                                             A_1 &0 \\
                                                            0 & A_2\\
                                                          \end{array}
                                                        \right)\left|\right.\ A_1^tJ_2+J_2A_1=0,\ A_2^t\tilde{I}+\tilde{I}A_2=0\right\} \subset\mathfrak{o}(n-r+1,r+1),\end{equation}
 and \begin{equation}\mathfrak{p}=\left\{
                                     \left(
                                                          \begin{array}{ccc}
                                                            0 & -b_1^t\tilde{I}\\
                                                            b_1 & 0  \\
                                                          \end{array}
                                                        \right) \right\} \subset\mathfrak{o}(n-r+1,r+1).
  \end{equation} Here $$\tilde{I}= \left(
                                     \begin{array}{cc}
                                       J_2 & 0 \\
                                     0& I_{n-r-1,r-1} \\
                                     \end{array}
                                   \right).
  $$
 Looking $O(n-r+1,r+1)/O(n-r,r)\times O(1,1)$ as a Grassmannian of two-dimensional Lorentzian plane in $\mathbb{R}^{n+2}_{r+1}$, the map $f=y\wedge\hat{y}: M\rightarrow O(n-r+1,r+1)/O(n-r,r)\times O(1,1)$ defines a pair of surfaces $y,\hat{y}:M \rightarrow Q^n_r$. $f$ is called {\em Lorentzian non-degenerate} if both $y$ and $\hat{y}$ are Lorentzian immersions.
 We have a timelike version of the main theorem in \cite{BHPP}, and Theorem 3.3 in \cite{Bur}:

\begin{theorem}\label{th-curveflat} The Lorentzian non-degenerate map $f=y\wedge\hat{y}: M\rightarrow O(n-r+1,r+1)/O(n-r,r)\times O(1,1)$ is a
curved flat if and only if $y$ and $\hat{y}$ is a Darboux pair of timelike $(\pm)-$isothermic surfaces.
\end{theorem}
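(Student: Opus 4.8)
For the final statement (Theorem~\ref{th-curveflat}) I would proceed as follows.

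The assertion is an equivalence, and the two implications are very different in character: the forward one is essentially a repackaging of the computation already carried out in Section~4.2, while the converse is the substantive part. For the direction ``Darboux pair $\Rightarrow$ curved flat'', start from a timelike $(\pm)$-isothermic surface $y$ with canonical lift $Y$ and adapted coordinate $(u,v)$ and a Darboux transform $\hat y$ with lift $\hat Y$ normalized by $\langle Y,\hat Y\rangle=-1$, and take the frame $F=(Y,-\hat Y,P_1,2P_2,\tilde\psi_3,\dots,\tilde\psi_n)$ of Section~4.2. This $F$ is a lift of $f=y\wedge\hat y$ because its first two columns span the $(1,1)$-plane $W=\mathrm{Span}\{Y,\hat Y\}$, and by \eqref{eq-moving2} its Maurer--Cartan form is $F^{-1}dF=(\alpha_0+\alpha_1)\,du+(\beta_0+\beta_1)\,dv$ with $\alpha_0,\beta_0\in\mathfrak k$ block diagonal and $\alpha_1,\beta_1\in\mathfrak p$ the off-diagonal matrices \eqref{eq-A2}, \eqref{eq-B2}. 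Hence the $\mathfrak p$-part of $F^{-1}dF$ is $\alpha_1\,du+\beta_1\,dv$, which is pointwise abelian since $\alpha_1\beta_1-\beta_1\alpha_1=0$ was already verified, so $f$ is a curved flat; equivalently, the three identities in \eqref{eq-darboux2} are exactly the $\lambda$-expansion of $d\alpha_\lambda+\tfrac12[\alpha_\lambda\wedge\alpha_\lambda]=0$ for the loop $\alpha_\lambda=(\alpha_0+\lambda\alpha_1)\,du+(\beta_0+\lambda\beta_1)\,dv$, so the loop criterion for curved flats applies. Lorentzian non-degeneracy of $f$ is clear because $y$ and $\hat y$ are timelike immersions.

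For the converse I would argue directly. Lorentzian non-degeneracy forces $\langle Y,\hat Y\rangle\neq0$; normalize it to $-1$. Choose $(u,v)$ to be an asymptotic coordinate of $y$ with canonical lift $Y$, so $\langle Y_u,Y_u\rangle=\langle Y_v,Y_v\rangle=0$ and $\langle Y_u,Y_v\rangle=\tfrac12$. In a frame $(Y,\hat Y,e_3,\dots,e_{n+2})$ with $\{e_j\}$ pseudo-orthonormal on $W^\perp$ one finds $Y_u=\mu Y+\omega'$, $Y_v=\nu Y+\omega''$, $\hat Y_u=-\mu\hat Y+\hat\omega'$, $\hat Y_v=-\nu\hat Y+\hat\omega''$ with $\omega',\omega'',\hat\omega',\hat\omega''\in W^\perp$ (the missing components vanish since $\langle Y,Y\rangle=\langle\hat Y,\hat Y\rangle=0$), where $\omega',\omega''$ are null with $\langle\omega',\omega''\rangle=\tfrac12$ and hence span a Lorentzian plane $\Pi\subset W^\perp$. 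The $\mathfrak p$-part of the Maurer--Cartan form is carried by $\omega'\,du+\omega''\,dv$ and $\hat\omega'\,du+\hat\omega''\,dv$, and the curved-flat condition $[\alpha_\mathfrak p\wedge\alpha_\mathfrak p]=0$ unwinds to $\langle\omega',\hat\omega''\rangle=\langle\hat\omega',\omega''\rangle$ together with $\hat\omega'\wedge\omega''=\hat\omega''\wedge\omega'$ in $\Lambda^2 W^\perp$. Decomposing $\hat\omega',\hat\omega''$ along $\Pi\oplus\Pi^\perp$ and separating the $\Lambda^2\Pi$ and the mixed components, these identities force $\hat\omega'=\theta_1\omega''$ and $\hat\omega''=\theta_2\omega'$, with $\theta_1,\theta_2$ nowhere zero because $\hat y$ is an immersion. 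Consequently $\hat Y_u\in\mathrm{Span}\{\hat Y,Y,Y_v\}$ and $\hat Y_v\in\mathrm{Span}\{\hat Y,Y,Y_u\}$, i.e.\ \eqref{eq-dar1} holds; differentiating these relations and imposing $\hat Y_{uv}=\hat Y_{vu}$ with the structure equations \eqref{eq-moving1}, as in the proof of Proposition~\ref{Darboux} run in reverse, yields $\theta_{1v}=\theta_{2u}=0$ and $\theta_1\kappa_2=\theta_2\kappa_1$, and absorbing $\theta_1(u),\theta_2(v)$ into a reparametrization $u\mapsto\tilde u(u)$, $v\mapsto\tilde v(v)$ makes $\theta_1=\pm\theta_2$ constant and $\kappa_1=\pm\kappa_2$ in the new adapted coordinate. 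Thus $y$ is timelike $(\pm)$-isothermic and $\hat y$ is its Darboux transform, and Proposition~\ref{Darboux}(iii) shows that $y,\hat y$ form a Darboux pair of timelike $(\pm)$-isothermic surfaces with corresponding curvature lines.

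The hard part is precisely the linear-algebra step in the converse: deducing the strong alignment $\hat\omega'\parallel\omega''$, $\hat\omega''\parallel\omega'$ from the abelian condition, rather than only $\hat\omega',\hat\omega''\in\Pi$. Since $W^\perp$ is indefinite and $\Pi$ contains the two asymptotic null directions of $y$, the mixed $\Pi\wedge\Pi^\perp$ component of $\hat\omega'\wedge\omega''=\hat\omega''\wedge\omega'$ must be analyzed carefully, and the sign $\pm$ — the dichotomy between $(+)$- and $(-)$-isothermic (real versus complex isothermic) surfaces — emerges exactly here; this is the point at which the Lorentzian argument diverges from the Riemannian one of \cite{BHPP} and \cite{Bur}. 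A cleaner but less self-contained alternative is to invoke that $O(n-r+1,r+1)/O(n-r,r)\times O(1,1)$ is a rank-two symmetric space, so that $K$ acts transitively on maximal abelian subspaces of $\mathfrak p$; conjugating $\alpha_\mathfrak p$ into the model flat spanned by \eqref{eq-A2}, \eqref{eq-B2} and passing to curved-flat coordinates reduces $F^{-1}dF$ to the normal form \eqref{eq-moving2}, off which \eqref{eq-dar1} and the $(\pm)$-isothermic structure equations can be read directly. Throughout one should keep track of the ``open dense subset'' caveats so that the various immersion hypotheses remain consistent on the domain in question.
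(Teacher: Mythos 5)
Your proof is correct, and while the forward implication is the same as the paper's (both take the frame $F=(Y,-\hat Y,P_1,2P_2,\tilde\psi_3,\dots,\tilde\psi_n)$ of Section~4.2 and identify the three equations \eqref{eq-darboux2} with the coefficients of $1,\lambda,\lambda^2$ in the flatness of $\alpha_\lambda$), your converse takes a genuinely different route. The paper stays inside the explicit matrix frame: it expands $\hat Y_u,\hat Y_v$ over $\{Y,\hat Y,P_1,P_2,\tilde\psi_j\}$, reads off from $\alpha_1\beta_1=\beta_1\alpha_1$ that the normal coefficients $p_j,q_j$ and the cross-terms $\tilde p_0,q_0$ vanish, hence \eqref{eq-dar1} and $\langle\hat Y_u,\hat Y_u\rangle=\langle\hat Y_v,\hat Y_v\rangle=0$, and then invokes Theorem~A of \cite{Wangpeng} (Blaschke's problem for timelike surfaces) to conclude that such an enveloping pair is automatically a Darboux pair of isothermic surfaces. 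You instead work invariantly with the splittings $W\oplus W^{\bot}$ and $\Pi\oplus\Pi^{\bot}$; your two identities are exactly the $W$-block and $W^{\bot}$-block of $[\alpha_{\mathfrak{p}}(\partial_u),\alpha_{\mathfrak{p}}(\partial_v)]=0$, and the linear-algebra step you flag as delicate does close up cleanly: writing $\hat\omega'=s_1\omega'+t_1\omega''+\eta_1$ and $\hat\omega''=s_2\omega'+t_2\omega''+\eta_2$, the mixed $\Pi\wedge\Pi^{\bot}$ component forces $\eta_1=\eta_2=0$ (this is metric-free, so the indefiniteness of $W^{\bot}$ is harmless), the $\Lambda^2\Pi$ component gives $s_1+t_2=0$, and the scalar identity gives $s_1=t_2$, whence the alignment $\hat\omega'\parallel\omega''$, $\hat\omega''\parallel\omega'$. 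Your finish --- extracting $\theta_1=\theta_1(u)$, $\theta_2=\theta_2(v)$, $\theta_1\kappa_2=\theta_2\kappa_1$ from $\hat Y_{uv}=\hat Y_{vu}$ and then reparametrizing to make $\kappa_1=\pm\kappa_2$ --- replaces the paper's external citation and makes the argument self-contained at the cost of a little length; it also makes visible where the $(\pm)$ dichotomy arises. The closing remark about $K$ acting transitively on maximal abelian subspaces is not needed and would itself require justification in the pseudo-Riemannian setting, but you offer it only as an aside.
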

\begin{proof}For a Darboux pair $(y,\hat{y})$ as described in above subsection, we have a frame $F$ such that
$$F^{-1}dF=\alpha du+\beta  dv=(\alpha _0+\alpha _1)du+(\beta  _0+\beta  _1)dv,$$
with $\alpha _0$, $\alpha _1$, $\beta  _0$ and $\beta  _1$ of the form in \eqref{eq-A1}, \eqref{eq-A2}, \eqref{eq-B1} and \eqref{eq-B2}. Set
$$\alpha _\lambda=\alpha _0+\lambda\alpha _1\ \hbox{ and } \ \beta  _\lambda=\beta  _0+\lambda\beta  _1$$
with $\lambda\in \mathbb{R}$.
One verifies directly that
$$\alpha _{\lambda v}-\beta  _{\lambda u}-\alpha _{\lambda}\beta  _{\lambda}+\beta  _{\lambda}\alpha _{\lambda}=0$$
is equivalent to the equations in \eqref{eq-darboux2}. As a consequence, $F$ induces a curved flat in $G/K$.

Conversely, let $f=y\wedge\hat{y}$ be a Lorentzian non-degenerate curved flat. Since $y$ inherits a Lorentzian metric, there exists some local  asymptotic coordinate $(u,v)$ and some lift $Y$ of $y$ such that $\<Y_u,Y_u\>=\<Y_v,Y_v\>=0,\ \<Y_u,Y_v\>=\frac{1}{2}$. Let $\hat{Y}$ be a lift of $\hat{y}$ such that $\langle\hat{Y},Y \rangle=-1$. Set \begin{equation*}
P_1=Y_v+aY \ \hbox{ and }\  P_2=Y_u+bY
\end{equation*}
as above such that $P_1\perp \hat{Y}$ and  $P_2\perp \hat{Y}$.
Choose $\tilde{\psi}_j$ such that
$$F=(Y,-\hat{Y},P_1,2P_2,\tilde{\psi}_3,\cdots,\tilde{\psi}_n)$$
represents a map into $O(n-r+1,r+1)$ (Hence $\epsilon_j=\<\psi_j,\psi_j\>$ with $\ \epsilon_j=1,\ 3\leq j\leq n-r+1,\ \epsilon_j=-1,\ n-r+2\leq j\leq n$). Then we may assume that
$$\hat{Y}_u=a\hat{Y}+p_0P_1+\tilde{p}_0P_2+\sum_{j=3}^n p_j\tilde{\psi}_j,\ \ \hat{Y}_v=b\hat{Y}+q_0P_1+\tilde{q}_0P_2+\sum_{j=3}^n q_j\tilde{\psi}_j.$$
Now set
$$F^{-1}dF= \alpha du+ \beta dv=(\alpha_0+\alpha_1)du+(\beta_0+\beta_1)dv$$
with $\alpha_0,\beta_0\in\mathfrak{k}$ and  $\alpha_1,\beta_1\in\mathfrak{p}$. Then we obtain that
\begin{equation*} \alpha_1=\left(
                           \begin{array}{cccccccc}
                            0& 0   & \frac{1}{2}\tilde{p}_0   &    p_0 &  p_3\epsilon_3 &   \cdots & p_n\epsilon_n \\
                            0  &0 & -\frac{1}{2} &   0  &  0 &   \cdots & 0  \\
                            0 & -p_0  & 0&   0 & 0 &   \cdots & 0 \\
                           \frac{ 1}{2}  &  -\frac{1}{2}\tilde{p}_0  & 0  & 0  & 0 &   \cdots &0\\
                            0  & -p_3 & 0&  0 &  0 &   \cdots & 0 \\
                            \cdots &  \cdots   &  \cdots    &  \cdots  &  \cdots   &  \cdots  &  \cdots    \\
                            0  & -p_n &  0  &  0 &  0 &   \cdots & 0  \\
                             \end{array}
                         \right)
\end{equation*}
and
\begin{equation*}  \beta_1=\left(
                           \begin{array}{cccccccc}
                            0 & 0   & \frac{1}{2}\tilde{q}_0 & q_0 &   q_3\epsilon_3 &   \cdots & q_n\epsilon_n \\
                            0 &  0 &    0 & - 1   &  0 &   \cdots & 0   \\
                            1  &  -q_0  &0&  0 & 0 &   \cdots & 0 \\
                            0 & -\frac{1}{2}\tilde{q}_0  & 0& 0  &    0 &   \cdots & 0\\
                            0   & -q_3 &  0& 0  &  0 &   \cdots & 0  \\
                            \cdots &  \cdots    &  \cdots    &  \cdots  &  \cdots   &  \cdots  &  \cdots   \\
                            0   &-q_n &  0& 0  &  0 &   \cdots & 0  \\
                             \end{array}
                         \right).
\end{equation*}
The conditions of curved flats are equivalent to \eqref{eq-darboux2}. Now the last matrix equation of \eqref{eq-darboux2} yields
$$p_3=\cdots=p_n=q_3=\cdots=q_n, \ \tilde{p}_0+q_0=0, \ \tilde{p}_0-q_0=0.$$
Therefore we obtain $$\hat{Y}_u\in span\{Y,\hat{Y},Y_v\},\ \hat{Y}_v\in span\{Y,\hat{Y},Y_u\}\ \hbox{ and } \ \<\hat{Y}_u,\hat{Y}_u\>=\<\hat{Y}_v,\hat{Y}_v\>=0.$$
Since $\hat{Y}$ is non-degenerate, $ \<\hat{Y}_u,\hat{Y}_v\>\neq 0$, by Theorem A of \cite{Wangpeng} (compare Proposition \ref{Darboux}), $[Y]$ and $[\hat{Y}]$ are
 a Darboux pair of timelike $(\pm)-$isothermic surfaces.
\end{proof}

\begin{remark}
 Note that when $\<\hat{Y}_u,\hat{Y}_v\> >0$, one obtains a pair of timelike $(+)-$isothermic surfaces, when $\<\hat{Y}_u,\hat{Y}_v\> <0$, one obtains a pair of timelike $(-)-$isothermic surfaces.
 \end{remark}

\begin{remark} It was explained in \cite{BHPP} that
 Christoffel transforms can be obtained as a limit of Darboux transforms for isothermic surfaces in $\mathbb{R}^3$.
 While for $c-$polar transforms, usually it can not be derived as a limit of Darboux transforms. Therefore the classical Christoffel transforms can be looked as a very special kind of transforms of  $c-$polar transforms which have closed relation with Darboux transforms.
 \end{remark}

\begin{remark}{\em $O(n-j+1,j+1/O(n-j,j))\times O(1,1)-$system and Timelike isothermic surfaces}:

Along an independent line, almost in the same time, Terng introduced the notion of $U/K-$system \cite{Terng}, which also has its roots on the study of transforms of submanifolds(\cite{TT}). In \cite{Terng}, Terng showed that the $U/K-$system, as an integrable system, inherits Lax pair and one also can introduce loop parameter $\lambda$, etc. Moreover, such systems are naturally related with several kinds of submanifolds together with their transforms. Such relationships are discussed in details in the later note \cite{BDPT}. As a special case, in \cite{BDPT}, they found that $G^1_{m,1}-$system ($O(m+1,1)/O(m)\times O(1,1)$) is exactly related with a pair of dual isothermic surfaces in $\mathbb{R}^m$. See Section 8 \cite{BDPT} for details.

In the recent work by Dussan and Magid \cite{Dussan-M2005}, they generalized the methods in \cite{BDPT} to derive a detailed description on timelike isothermic surfaces in $\mathbb{R}^n_j$. Comparing with the formulas in Theorem 3.1 and Theorem 3.2 in \cite{Dussan-M2005}, one can see that these are exactly the same as the integrability conditions in \eqref{eq-inte1}, \eqref{eq-inte2}, \eqref{eq-inte3} for the case $(+)-$isothermic surfaces. And the formulas in Theorem 3.1 and Theorem 3.2 in \cite{Dussan-M2005}, one can see that these are exactly the same as the integrability conditions in \eqref{eq-inte1}, \eqref{eq-inte2}, \eqref{eq-inte3} for the case $(-)-$isothermic surfaces. To get Darboux transforms, they need the methods of dressing actions, see \cite{BDPT}, \cite{Dussan-M2006}.

 \end{remark}

\section{Two permutability theorems}
\subsection{Spectral transforms of timelike isothermic surfaces}
Let $y:M\to Q^n_r$ be an immersed timelike $(+)-$isothermic surface with adapt coordinate $(u,v)$ and invariants in Section 2. If one only change the Schwarzians from $s_i$ to $s^{\tilde c}_i=s_i+\tilde c$, with all the other coefficients invariant, then  the conformal Gauss, Codazzi, and
Ricci equations are remain satisfied, where $\tilde c\in \R$ is a parameter.

To be concrete, if we consider a new data as below
\begin{align*}
&s^{\tilde c}_i=s_i+\tilde c,\hskip 5pt \<k^{\tilde c}_i,\psi^{\tilde c}\>=\<k_i,\psi\>,\hskip 5pt
\<\psi^{\tilde c},\psi^{\tilde c}\>=\<\psi,\psi\>,\hskip 5pt D^{\tilde c}_z=D_z.
\end{align*}
where $\psi$ is arbitrary section of normal bundle with deforming
normal section $\psi^{\tilde c}$. The integrable equations are satisfied and then by Theorem \ref{Bonnet}, there will be an associate family
of non-congruent timelike  $(+)-$isothermic surfaces $[Y^{\tilde c}]$ with corresponding invariants.

Note that for a timelike $(-)-$isothermic surface, the deformation should be  $s_1$ to $s^{\tilde c}_1=s_1+\tilde c$ and  $s_2$ to $s^{\tilde c}_2=s_2-\tilde c$.

\begin{definition}  The timelike isothermic surface  $[Y^{\tilde c}]$ is called a ($\tilde{c}-$parameter) spectral
transform of the  timelike isothermic surface $y:M\to Q^n_r$.
\end{definition}

\begin{theorem}
Let $y^{\tilde c}$ be a $\tilde{c}-$parameter spectral transform of a timelike $(\pm)-$ isothermic surface $y:M\to Q^n_r$.
Denote their canonical lift as $Y$, $Y^{\tilde c}$
for the same adapted coordinate $(u,v)$. Let $\psi$ be a non-degenerate $c-$polar surface of $y$. Then there exists
a $c-$polar surface $\psi^{\tilde c}$ of $y^{\tilde c}$ satisfying that $\psi^{\tilde c}$ is a $\tilde{c}-$parameter spectral
transform of $\psi$.  In other words, one has the commuting diagram:
\[
\begin{array}{cccc}
[Y] & \longrightarrow & [Y^{\tilde c}] \\
\downarrow  &\ & \downarrow\\
\psi  & \longrightarrow & \psi^{\tilde c}
\end{array}
\]
\end{theorem}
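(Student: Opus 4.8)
The plan is to settle the claim through the uniqueness part of the Bonnet-type Theorem~\ref{Bonnet}: I will exhibit the $c$-polar surface $\psi^{\tilde c}$ of $y^{\tilde c}$, compute its conformal data, and verify that these data coincide with those of the $\tilde c$-parameter spectral transform of the $c$-polar surface $\psi$ of $y$. The key opening observation is that a $c$-polar surface carries very little information: by Definition~3.1 it is a section $\psi\in\Gamma(V^{\bot})$ with $D_u\psi=D_v\psi=0$ and $\langle\psi,\psi\rangle=c$, hence it is determined by the normal bundle $NM$, the normal connection $D_u,D_v$ and the constant $c$ alone. Since the spectral deformation of Section~5.1 leaves $NM$, $D_u$, $D_v$, $\kappa_1$, $\kappa_2$ unchanged and only replaces $s_1,s_2$ by $s_1+\tilde c,\ s_2+\e\tilde c$ (with $\e=\pm1$ fixed by $\kappa_2=\e\kappa_1$, so the $+$ sign occurs for $(+)$-isothermic and $-$ for $(-)$-isothermic surfaces), the same abstract parallel section, transported along the deformation, yields a $c$-polar surface $\psi^{\tilde c}:M_0\to N^{n+1}_r(\tfrac{1}{c})$ of $y^{\tilde c}$; moreover the non-degeneracy locus $M_0=\{\langle\psi,\kappa_1\rangle\neq0\}$ is the same for $y$ and for $y^{\tilde c}$, so $\psi^{\tilde c}$ is again non-degenerate. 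By Theorem~3.4 both $\psi$ and $\psi^{\tilde c}$ are then timelike $(\pm)$-isothermic with the common adapted coordinate $(u,v)$, and since the conformal factor relating either one to its canonical lift is governed only by $\langle\psi,\kappa_1\rangle$ (recall $\langle\psi_u,\psi_v\rangle=2\e\langle\psi,\kappa_1\rangle^2$ and the dual lift $\langle\psi,\kappa_1\rangle^{-1}Y$ from that proof), $\psi$ and $\psi^{\tilde c}$ carry the same adapted conformal normalization.

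It then remains to compare the conformal invariants of $\psi^{\tilde c}$ with those of $\psi$, which I would do by rerunning the computation in the proof of Theorem~3.4 with $Y$ replaced by $Y^{\tilde c}$ and $s_i$ by $s_i+\tilde c$ (resp.\ $s_2+\e\tilde c$). The point is that the Hopf differentials $\O_1^{\psi},\O_2^{\psi}$, the (flat) conformal normal bundle of $\psi^{\tilde c}$ and its normal connection are all expressed purely through $\kappa_1,\kappa_2,D\kappa_1,D\kappa_2$ and the functions $\langle\psi,\cdot\rangle$: expanding $\O_i^{\psi}$ and these connection coefficients in the frame $\{Y,Y_u,Y_v,N,\psi_\alpha\}$ produces coefficients carrying no explicit $s_i$, and the apparent components along the canonical lift of $\psi$ cancel against the corresponding corrections in its conformal normal frame, exactly as the $Y$-terms cancel in the passage \eqref{eq-trans2}. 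Hence, in the common adapted coordinate and relative to compatible parallel normal frames, $\psi^{\tilde c}$ and $\psi$ share conformal Hopf differentials, normal connection and normal bundle. The only channel through which the shift $s_i\mapsto s_i+\tilde c$ can propagate is the mean curvature $H^{\psi}$, and it does so solely via the coefficient $Q=2\langle\psi,D_vD_v\kappa_1\rangle+s_2\langle\psi,\kappa_1\rangle$ of $Y$ in $\psi_{uv}$; feeding $\O_i^{\psi}$ and $H^{\psi}=2e^{-2\o^{\psi}}\psi_{uv}+\tfrac{1}{c}\psi$ into the isometric-to-conformal Schwarzian relation (the analogue of \eqref{eq-trans2}), using $e^{2\o^{\psi}}=4\e\langle\psi,\kappa_1\rangle^2$ and the integrability relation \eqref{eq-inte2}, one finds $s_1^{\psi^{\tilde c}}=s_1^{\psi}+\tilde c$ and $s_2^{\psi^{\tilde c}}=s_2^{\psi}+\e\tilde c$, precisely the spectral shift of a $(\pm)$-isothermic surface. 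At that stage $\psi^{\tilde c}$ and the $\tilde c$-parameter spectral transform of $\psi$ have identical conformal data, so Theorem~\ref{Bonnet} gives that they differ by a M\"obius transformation of $Q^{n+1}_r$; this is the commutativity asserted in the diagram.

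The genuine work, and the only place I expect real resistance, is the Schwarzian bookkeeping of the second paragraph: one must check that passing from $Y$ to $Y^{\tilde c}$ and then to the canonical lift of $\psi^{\tilde c}$ propagates the constant $\tilde c$ with no spurious rescaling (so that the spectral parameter of the transform of $\psi$ is again exactly $\tilde c$), and one must keep track of the interchange of the roles of $s_1$ and $s_2$ inherent in the polar construction — the $u$-Schwarzian of the polar surface is built from the $v$-data of the original — which is exactly what makes the sign $\e$ come out right in the $(-)$-isothermic case, where the deformation is $s_1\mapsto s_1+\tilde c$, $s_2\mapsto s_2-\tilde c$. Everything else — existence of $\psi^{\tilde c}$, equality of adapted coordinates, and invariance of all the normal data — is immediate once one notices that the $c$-polar construction sees only $(NM,D_u,D_v,\kappa_1,\kappa_2)$, which the spectral transform preserves.
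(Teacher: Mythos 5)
Your proposal is correct and follows essentially the same route as the paper: the spectral deformation preserves the normal bundle, the normal connection and the $\kappa_i$, so the same parallel section serves as the $c$-polar surface of $y^{\tilde c}$, and the only point to verify is that the Schwarzians of $\psi^{\tilde c}$ are those of $\psi$ shifted by $\tilde c$ (resp.\ $\e\tilde c$), which the paper establishes via exactly the relation $s_1^{\psi}=2\o^{\psi}_{uu}-2(\o^{\psi}_u)^2+\langle\psi,D_uD_u\kappa_2\rangle/\langle\psi,\kappa_2\rangle+s_1$ (and its $v$-counterpart) that your second paragraph derives, with the Codazzi equation \eqref{eq-inte2} resolving the $u$/$v$ bookkeeping you flag. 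The paper's proof is simply a terser version of your argument.
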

\begin{proof}
 Note that the spectral transforms preserve normal connection and the corresponding $k_i$. Let $\psi^{\tilde c}$ denote
the corresponding parallel normal section of $\psi$, by Theorem 3.4, $\psi^{\tilde c}$ is also a $c-$polar
 transform of $y^{\tilde c}$. By using (15), (17)-(21), all the other conditions except Schwarzian
 are satisfied.  For $s_i$, we have that
\begin{equation*}
 \left\{\begin {array}{lllll}
s_1^{\psi}=2\o^\psi_{uu}-2(\o^\psi_u)^2+\frac{\<\psi,D_u D_u \kappa_2\>}{\<\psi,\kappa_2\>}+s_1,\\
s_2^{\psi}=2\o^\psi_{vv}-2(\o^\psi_v)^2+\frac{\<\psi,D_v D_v \kappa_1\>}{\<\psi,\kappa_1\>}+s_2.
\end {array}\right.
\end{equation*}
So $s^{\tilde c}_i=s_i+\tilde c$ when $y$ is $(+)-$isothermic and $\psi^{\tilde c}$ is
a $\tilde{c}-$parameter spectral transform of $\psi$. When $y$ is $(-)-$isothermic, we have $s^{\tilde c}_1=s_1+\tilde c$ , $s^{\tilde c}_2=s_2-\tilde c$ and  $\psi^{\tilde c}$ is also
a $\tilde{c}-$parameter spectral transform of $\psi$.
\end{proof}

\subsection{Permutability with Darboux transforms}

\begin{theorem}
Let $y:M\to Q^n_r$ be a timelike $(\pm)-$isothermic surface and $[\hat Y]$ be a
Darboux transform of $y$. If $\psi$ is a non-degenerate c-polar surface of $y$, then
their exits a c-polar surface $\hat\psi$ of $\hat y$ such that $\hat\psi$ is also a Darboux
transform of $\psi$.
\end{theorem}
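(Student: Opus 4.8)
The plan is to build $\hat\psi$ explicitly in the frame adapted to the Darboux pair, mimicking the permutability argument for spacelike isothermic surfaces. Writing $\hat Y=N+2aY_u+2bY_v+(2ab+\tfrac12\langle\xi,\xi\rangle)Y+\xi$ with $\xi\in\Gamma(V^{\bot})$ as in Section~4, one has $\langle\psi,\hat Y\rangle=\langle\psi,\xi\rangle$, $\langle Y,\hat Y\rangle=-1$, $\langle\hat Y,\hat Y\rangle=0$, and $\langle\psi,P_1\rangle=\langle\psi,P_2\rangle=0$, where $P_1=Y_v+aY$, $P_2=Y_u+bY$. I would then set
\[
\hat\psi=\psi+\langle\psi,\xi\rangle\,Y+\frac{2\langle\psi,\kappa_1\rangle}{\theta_1}\,\hat Y .
\]
The $Y$-coefficient is the unique choice making $\langle\hat\psi,\hat Y\rangle=0$; the $\hat Y$-coefficient is then forced by requiring the $Y_v$-component of $\hat\psi_u$ to vanish, so that $\hat\psi_u\in\mathrm{Span}\{\hat Y,\hat Y_v\}$ can hold, using $\hat Y_u=b\hat Y+\theta_1(Y_v+aY)$ from \eqref{eq-dar2} and $\psi_u=2\langle\psi,D_v\kappa_1\rangle Y-2\langle\psi,\kappa_1\rangle Y_v$ from \eqref{eq-moving1} with $D_u\psi=0$. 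The decisive point is consistency with the $v$-side: because $\kappa_1=\pm\kappa_2$ and, by \eqref{eq-darboux}, $\theta_1=\pm\theta_2$ with the \emph{same} sign, requiring instead the $Y_u$-component of $\hat\psi_v$ to vanish gives the identical coefficient $2\langle\psi,\kappa_2\rangle/\theta_2=2\langle\psi,\kappa_1\rangle/\theta_1$; this is exactly where the $(\pm)$-isothermic hypothesis is used.

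Next I would verify that $\hat\psi$ is a $c$-polar transform of $\hat y$ via the two conditions of Proposition~3.2 applied to the lift $\hat Y$ (these conditions are independent of the lift, and $(u,v)$ is an adapted coordinate of $\hat y$ by Proposition~\ref{Darboux}). Orthogonality of $\hat\psi$ to $\hat Y$, $\hat Y_u$, $\hat Y_v$ follows from $\langle\hat\psi,\hat Y\rangle=0$ and $\langle\hat\psi,P_1\rangle=\langle\hat\psi,P_2\rangle=0$. For condition (ii) one differentiates the ansatz, substitutes \eqref{eq-moving2} and uses $D_u\psi=D_v\psi=0$; after the $Y_v$-component of $\hat\psi_u$ has been cancelled, the residual requirement that $\hat\psi_u$ genuinely lie in $\mathrm{Span}\{\hat Y,\hat Y_v\}$ collapses --- via $\partial_u\langle\psi,\xi\rangle=\langle\psi,D_u\xi\rangle$ and the identity $\langle\psi,D_u\xi-b\xi\rangle=-2a\langle\psi,\kappa_1\rangle-2\langle\psi,D_v\kappa_1\rangle$ obtained by comparing the two expressions for $\psi_u$ in \eqref{eq-moving1} and \eqref{eq-moving2} --- to a triviality; the $v$-equation is symmetric. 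Finally $\langle\hat\psi,\hat\psi\rangle=\langle\psi,\psi\rangle=c$, since with $\langle Y,Y\rangle=\langle\hat Y,\hat Y\rangle=0$ and $\langle Y,\hat Y\rangle=-1$ all cross terms cancel once the $Y$-coefficient equals $\langle\psi,\hat Y\rangle$. Thus $\hat\psi$ is a $c$-polar transform of $\hat y$, an immersion on the open dense set where $\langle\hat\psi,\hat\kappa_1\rangle\neq0$, which differs from $\langle\psi,\kappa_1\rangle$ by a nowhere-zero factor.

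It then remains to show $\hat\psi$ is a Darboux transform of $\psi$. Since $\hat\psi-\psi=\langle\psi,\xi\rangle Y+\tfrac{2\langle\psi,\kappa_1\rangle}{\theta_1}\hat Y\in\mathrm{Span}\{Y,\hat Y\}$, which is a $(1,1)$-plane, and since by Theorem~3.4(i) the dual (Christoffel) lifts of $\psi$ and $\hat\psi$ are $\langle\psi,\kappa_1\rangle^{-1}Y$ and $\langle\hat\psi,\hat\kappa_1\rangle^{-1}\hat Y$ --- lifts of $y$ and $\hat y$ --- the two surfaces $\psi$ and $\hat\psi$ envelope a common ``Lorentzian $2$-sphere'' congruence. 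One then checks the tangential span conditions of Definition~4.1 (equivalently, of the criterion used in the proof of Theorem~\ref{th-curveflat}) for the canonical lifts of $\psi,\hat\psi$ in the conformal compactification of $N^{n+1}_r(\tfrac1c)$ (for $c=0$, directly for $[\psi],[\hat\psi]$ in $Q^n_r$): differentiating the formula for $\hat\psi$, substituting \eqref{eq-moving2}, and reducing modulo $\hat\psi$ and $\psi$ shows $\hat\psi_u\in\mathrm{Span}\{\hat\psi,\psi,\psi_v\}$ and $\hat\psi_v\in\mathrm{Span}\{\hat\psi,\psi,\psi_u\}$, where $\psi_u=2\langle\psi,D_v\kappa_1\rangle Y-2\langle\psi,\kappa_1\rangle Y_v$ and $\psi_v=2\langle\psi,D_u\kappa_2\rangle Y-2\langle\psi,\kappa_2\rangle Y_u$ identify the one-dimensional directions involved. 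This yields the commuting square.

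The step I expect to be the main obstacle is this last verification: one must check not merely that $\hat\psi_u$ lands in $\mathrm{Span}\{Y,Y_u,\hat\psi,\psi\}$ but that its component there is actually proportional to $\psi_v$, which forces a scalar identity relating the Darboux data $a,b,\theta_1,\xi$ to the polar data $\langle\psi,\kappa_i\rangle$ and $\langle\psi,D_z\kappa_i\rangle$. This identity should be a consequence of \eqref{eq-darboux} and \eqref{eq-moving2}, but isolating it cleanly --- and keeping the signs straight in the $(-)$-isothermic case, where $\kappa_1=-\kappa_2$ and $\theta_1=-\theta_2$, even though these signs cancel in the coefficient $2\langle\psi,\kappa_1\rangle/\theta_1$ --- is the delicate part. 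Sharing of the adapted coordinate $(u,v)$ between $\psi$ and $\hat\psi$, required in Definition~4.1, is automatic since $y$, $\hat y$ and $\psi$ all share it by Theorem~3.4(i) and Proposition~\ref{Darboux}.
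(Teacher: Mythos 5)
Your proposal is correct and follows essentially the same route as the paper: the same frame $\{Y,\hat Y,P_1,P_2,\psi\}$, the same ansatz $\hat\psi=\psi+\langle\psi,\xi\rangle Y+\frac{2\langle\psi,\kappa_2\rangle}{\theta_2}\hat Y$ (your coefficient $2\langle\psi,\kappa_1\rangle/\theta_1$ equals this since $\kappa_1=\varepsilon\kappa_2$ and $\theta_1=\varepsilon\theta_2$ with the same $\varepsilon$), and the same final verification that $\hat\psi_u$ lies in $\mathrm{Span}\{\hat\psi,\psi,\psi_v\}$ and symmetrically in $v$. The scalar identity you flag as the main obstacle is exactly the computation the paper carries out to obtain its closed-form expressions for $\hat\psi_u$ and $\hat\psi_v$.
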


\begin{proof}

Write out $\hat Y$ explicitly:
\begin{equation}
\hat Y=N+2aY_u+2bY_v+(2ab+\frac{1}{2}\<\xi,\xi\>)Y+\xi.
\end{equation}
Set
\begin{equation}
P_1=Y_v+aY,\hskip 5pt P_2=Y_u+bY
\end{equation}
The structure equations (14) of Y can be rewritten with respect to the frame
$\{Y,\hat Y,P_1,P_2,\psi\}$ as below
\begin{equation}\label{eq-moving6}
\left\{\begin {array}{lllll}
\hat Y_u=b\hat Y+\t_1P_1,\\
\hat Y_v=a\hat Y+\t_2P_2, \\
P_{1u}=-bP_1+\frac{1}{2}\hat Y-\frac{1}{2}\<\xi,\xi\>Y-\frac{1}{2}\xi,\\
P_{1v}=aP_1+(\frac{\t_2}{2}+\<\kappa_2,\xi\>)Y+\kappa_2,\\
P_{2u}=bP_2+(\frac{\t_1}{2}+\<\kappa_1,\xi\>)Y+\kappa_1,\\
P_{2v}=-aP_2+\frac{1}{2}\hat Y-\frac{1}{2}\<\xi,\xi\>Y-\frac{1}{2}\xi,\\
\psi_u=-2\<\psi,\kappa_1\>P_1-\<\psi,D_u\xi-b\xi\>Y,\\
\psi_v=-2\<\psi,\kappa_2\>P_2-\<\psi,D_v\xi-a\xi\>Y,\\
\end {array}\right.
\end{equation}
Now let us find out the $c-$polar transform of $\hat Y$ corresponding to $\psi$. From (29)
we have
\begin{equation*}
\hat Y_{uv}=-\frac{1}{4}\<\xi,\xi\>\hat Y +\frac{\t_1\t_2}{2}\hat N
\end{equation*}
where
\begin{equation*}
\hat N=(1+\frac{2\<\kappa_2,\xi\>}{\t_2})Y+\frac{2a}{\t_2}P_1+\frac{2b}{\t_1}P_2+
2(\frac{ab}{\t_1\t_2}+\frac{2\<\kappa_1,\kappa_2\>}{\t_1\t_2})\hat Y+\frac{2}{\t_2}\kappa_2.
\end{equation*}
Denote
\begin{equation}
\hat\psi=\psi+\<\psi,\xi\>Y+\frac{2\<\psi,\kappa_2\>}{\t_2}\hat Y.
\end{equation}
It is straightforward to verify that
\begin{align*}
\hat\psi\perp\{\hat Y,\hat Y_u,\hat Y_v,\hat Y_{uv}\},
\end{align*}
and $\hat\psi$ is a parallel section of $\hat Y$ with length $c$.
Computing shows
\begin{equation}
\hat\psi_u=-\frac{\<\psi,\xi\>}{2\<\psi,\kappa_2\>}\psi_v+\left(\frac{\<\psi,D_u\kappa_2\>}{\<\psi,\kappa_2\>}+b\right)(\hat\psi-\psi),
\end{equation}
and
\begin{equation}
\hat\psi_v=-\frac{\<\psi,\xi\>}{2\<\psi,\kappa_1\>}\psi_u+\left(\frac{\<\psi,D_v\kappa_2\>}{\<\psi,\kappa_2\>}+a\right)(\hat\psi-\psi).
\end{equation}
This shows that $\hat\psi$ is a Darboux transform of $\psi$ by Definition 4.1.
\end{proof}


\begin{thebibliography}{}
%
%


\bibitem{BDPT} Bruck, M., Du, X., Park, J., Terng, C-L.,{\em  The Submanifold Geometries associated to Grassmannian Systems,} Memoirs of A.M.S.
no.735 (2002).
\bibitem{Bur} Burstall, F. {\em
Isothermic surfaces: conformal geometry, Clifford algebras and
integrable systems,}  Integrable systems, geometry, and topology,
1--82, AMS/IP Stud. Adv. Math., 36, Amer. Math. Soc., Providence,
RI, 2006. Math.DG/0003096.
\bibitem{BHPP} Burstall, F., Hertrich-J, U., Pedit, F., Pinkall, U.
{\em  Curved flats and isothermic surfaces,}
Math.Z. 225(1997), no.2, 199--209.
\bibitem{BPP} Burstall, F., Pedit, F., Pinkall, U.
{\em  Schwarzian derivatives and flows of surfaces,} Contemporary Mathematics 308, 39--61, Providence, RI: Amer. Math. Soc.,
  2002
\bibitem{CGS} Cieslinski, J., Goldstein, P., Sym, A.
{\em  Isothermic surfaces in $\mathbb{R}^3$ as soliton surfaces,}
Phys. Lett. A. 205 (1995), 37--43.


\bibitem{DWang} Deng, Y. J., Wang, C. P. {\em Time-like Willmore surfaces in Lorentzian 3-space.} Sci China Ser A-Math, 49(1): 75--85 (2006).


\bibitem{Dussan-M2005} Dussan, M. P.; Magid, M. A. {\em Time-like isothermic surfaces associated to Grassmannian systems.}
 Doc. Math. 10 (2005), 527--549.

\bibitem{Dussan-M2006}Dussan, M. P.; Magid, M. {\em  Complex timelike isothermic surfaces and their geometric transformations.}
 Balkan J. Geom. Appl. 11 (2006), no. 1, 39--53.

\bibitem{FP} Ferus, D., Pedit, F. {\em  Curved flats in symmetric spaces,} manuscripta math., 91(1996), 445-454.
\bibitem{F-I}Fujioka, A., Inoguchi, J., {\em Timelike Bonnet surfaces in Lorentzian space forms,} Diff. Geom. Appl. 18, 2003, 103--111.

\bibitem{Gu-H} Gu,  C. H., Hu, H. S., Zhou, Z. X., : {\em Darboux transformations in integrable systems: theory and their applications to geometry}, Springer, Dordrecht 2005.

\bibitem{In} Inoguchi, J-I., {\em Darboux transformations on timelike constant mean curvature surfaces,} J. Geom. and Phys. 32 (1999), 57-78.

 \bibitem{In-To}Ingouchi, J., Toda, M., {\em Timelike Minimal Surfaces via Loop Groups}
Acta Appl. Math.
Vol. 83, No. 3, 2004, 313-355.

\bibitem{Hong} Hong, J. Q., {\em Timelike surfaces with mean curvature one in anti de Sitter 3-space}, Kodai Math.
J. 17 (1994), 341--350.
\bibitem{Lee} Lee, S., {\em Timelike surfaces of constant mean curvature ¡À1 in anti-de Sitter 3-space $H^3_1$}
Ann. Glob. Anal. Geom.
Vol. 29, No. 4, 2006, 355--401.

\bibitem{Jer}
    Hertrich-Jeromin, U., {\em Introduction to M\"{o}bius Differential Geometry.}
London Mathematical Society Lecture Note Series 300, Cambridge
University Press, Cambridge, 2003.

\bibitem{Ma1} Ma, X., {\em  Isothermic and S-Willmore surfaces as solutions to a Problem of Blascke}, Results in Math., 48 (2005), 301--309.

\bibitem{Ma-W2} Ma, X., Wang, P., {\em  Polar transform of Spacelike isothermic surfaces in 4-dimensional
Lorentzian space forms}, Results in Math., 52 (2008) 347--358.

\bibitem{Ma-W3} Ma, X., Wang, P., {\em  On the Willmore functional of timelike torus
in Lorentzian space forms}, preprint.


\bibitem{Mag} Magid, M. A., {\em Lorentzian Isothermic Surfaces in $R^n_j$ ,} Rocky Mountain J. M. Vol.35, No. 2, 2005 627--640.
\bibitem{Neil} O'Neill, B. O., {\em Semi-Riemannian Geometry with Application to Relativity,} Pure Appl. Math.
130, Academic Press, Orlando, 1983.
\bibitem{Smyth} Smyth, B., {\em Soliton surfaces in the Mechanical equilibrium of closed membranes}, Commun. Math. Phys.  250, (2004), 81--94.

\bibitem{TT}Tenenblat, K., Terng, C. L., {\em  B\"{a}cklund's theorem for n-dimensional submanifolds
of $R^{2n-1}$ }, Ann. Math., 111 (1980), 477--490.

\bibitem{Terng} Terng, C.L., {\em Soliton equations and differential geometry,} J. Differential Geometry 45 (1997),
407--445.

\bibitem{Wangpeng} Wang, P., {\em Blaschke's problem for timelike surfaces in pseudo-Riemannian space forms}, International Journal of Geometric Methods in Modern Physics (IJGMMP), Vol. 7, No.7,  1147--1158, (2010).


\bibitem{Wangpeng2}Wang, P., {\em  Generalized polar transforms of spacelike isothermic surfaces,}  J. Geom. Phys. 62 (2012), no. 2, 403--411.
\bibitem{Wei} Weinstein, T., {\em An Introduction to Lorentz Surfaces,} de Gruyter Exposition in Math. 22, Walter
de Gruyter, Berlin, 1996.

 \bibitem{ZCC}Zuo, D., Chen, Q., Cheng, Y.,  {\em $G_{m,n}^{p,q}-$system II and
diagonalizable timelike immersions in $R^{p,m}$},  Inverse Problems,
20, (2004), 319--329.

\end{thebibliography}


\begin{flushleft}
Yuping Song\\
LMAM\\School of Mathematical Sciences\\Xiamen University
\\Xiamen
361005,
\\ P. R. China\\
\texttt{ypsong@xmu.edu.cn}
\end{flushleft}
\begin{flushleft}
Peng Wang\\
Department of Mathematics,
\\Tongji University
\\Shanghai, 200092,\\ P. R. China\\
\texttt{netwangpeng@tongji.edu.cn}
\end{flushleft}
\end{document}